\documentclass[12pt]{article}

\usepackage[normalem]{ulem}

\usepackage{BOONDOX-cal}
\usepackage{amsmath,amsthm}
\usepackage{amsfonts,amsmath}
\usepackage{amssymb}
\usepackage{fullpage}
\usepackage{epsf}
\usepackage{color}
\usepackage{graphicx}
\usepackage{pdflscape}
\usepackage{rotating}

\usepackage{yfonts}

\usepackage{thmtools}
\usepackage{thm-restate}

\usepackage{hyperref}
\usepackage{cleveref}

\usepackage{caption}
\usepackage{subcaption}
\usepackage{mathrsfs}
\usepackage{pgf,tikz}
\usepackage{multirow}

\usepackage{tikzsymbols}

\usetikzlibrary{calc}

\usepackage{mathtools}
\usepackage{xfrac}
\usepackage{relsize}

\usepackage{array}
\usepackage{tabularray}

\newtheorem{theorem}{Theorem}[section]
\newtheorem{lemma}[theorem]{Lemma}

\theoremstyle{definition}

\newtheorem{remark}[theorem]{Remark}

\def\Z{\mathbb{Z}}

\def \cA {{\cal A}}
\def \cB {{\cal B}}
\def \cC {{\cal C}}
\def \cD {{\cal D}}

\def \cX {{\cal X}}

\def \Z {\mathbb Z}

\newcommand{\TPT}[1]{{\mathrm{TPT}}_#1}

\def\Z{\mathbb{Z}}
\def\L{L}
\def\fd{\textgoth{F}}
\def\df{\overrightarrow{\cD}}
\def\db{\overleftarrow{\cD}}
\def\BG{\mathrm{BG}}
\def\MG{\mathrm{MG}}

\def\D{{\cal D}}

\def\a{\alpha}
\def\b{\beta}
\def\g{\gamma}
\def\G{\Gamma}
\def\D{\Delta}
\def\E{\mathcal{E}}
\def\EA{\textgoth{A}}

\definecolor{darkgreen}{RGB}{34, 150, 65}

\definecolor{gold(metallic)}{rgb}{0.83, 0.69, 0.22}
\newcommand{\comment}[1]{}

\newcommand{\Keywords}[1]{\par\noindent{\bfseries Keywords}: #1}
\newcommand{\MSC}[1]{\def\MSCok{\relax}\par\noindent{\bfseries MSC}: #1}

\def\l{\ell}

\def \cC {{\cal C}}

\title{Uniquely 2-colourable 4-cycle decompositions}
\author{\textit{Andrea C.~Burgess\thanks{Department of Mathematics and Statistics, University of New Brunswick, Saint John, NB, E2L~4L5, Canada. \href{mailto:andrea.burgess@unb.ca}{{\fontfamily{cmtt}\selectfont andrea.burgess@unb.ca}}}} \and \textit{David A.~Pike\thanks{Department of Mathematics and Statistics, Memorial University of Newfoundland, St.~John’s, NL, A1C~5S7, Canada. \href{mailto:dapike@mun.ca}{{\fontfamily{cmtt}\selectfont dapike@mun.ca}}}} \and \textit{Shahriyar Pourakbar Saffar\thanks{Department of Mathematics and Statistics, Memorial University of Newfoundland, St.~John’s, NL, A1C~5S7, Canada. \href{mailto:spourakbarsa@mun.ca}{{\fontfamily{cmtt}\selectfont spourakbarsa@mun.ca}}}}}

\begin{document}

\maketitle

\begin{abstract}
A cycle system of order $n$ is a decomposition of the edges of the complete graph $K_n$ into cycles of a fixed length. A cycle system is said to be $k$-colourable if we can assign $k$ colours to its vertices so that no cycle is monochromatic. A $k$-colourable cycle system is uniquely $k$-colourable if its colouring is unique up to the permutation of colour classes. In this paper, we construct uniquely $2$-colourable $4$-cycle systems of order $n$ for all admissible  $n\geq 49$, and also uniquely $2$-colourable $4$-cycle decompositions of $K_n - I$, for all admissible $n \geq 50$. These constructions contribute to the broader study of uniquely colourable cycle systems and open new directions for future research.
\end{abstract}
\vskip-0.1cm
\Keywords{cycle systems, graph colouring, unique colouring, combinatorial design}
\MSC{05C51, 05C15, 05B30}
\par\vskip.5cm

\section{Introduction}

We begin with a few well-known definitions. For $m>2$, an \textit{$m$-cycle decomposition} of a graph $G$ is a family of cycles of length $m$ such that every edge of $G$ appears in exactly one of the cycles. The necessary and sufficient conditions for the existence of an $m$-cycle decomposition of the complete graph $K_n$, also called an \textit{$m$-cycle system of order $n$}, are that either $n= 1$, or $n\geq m$, $n$ is odd, and $m$ divides $ \frac{n(n-1)}{2}$ \cite{AlspachGavlas2001,MR2012428, MR2700989,MR1871681}. More specifically, a $4$-cycle system of order $n$ exists if and only if $n\equiv 1\ (\mathrm{mod}\ 8)$. When $I$ is a 1-factor on $n$ vertices, it is also known that an $m$-cycle decomposition of the cocktail party graph $K_n- I$ (i.e., the graph obtained by removing the edges of $I$ from $K_n$) exists if and only if either $n=2$, or $n\geq m$, $n$ is even, and $m$ divides $ \frac{n(n-2)}{2}$ \cite{AlspachGavlas2001, MR2700989,MR1871681}. Consequently, a $4$-cycle decomposition of the graph $K_n- I$ exists if and only if $n\geq 2$ is even. We use the adjective ``\textit{admissible}'' for any order $n$ that satisfies the relevant conditions for the graph in question.

For a positive integer $k$, a \textit{weak $k$-colouring} of an $m$-cycle decomposition, namely $\cD$, of the graph $G$ is a mapping $\varphi:V(G)\rightarrow \{c_1,\ldots,c_k\}$ that assigns one of the colours $c_1,c_2,\ldots,c_k$ to each vertex of $G$ in such a way that no $m$-cycle in the decomposition is monochromatic, i.e., for all $C\in\cD$, $\lvert\varphi(V(C))\rvert >1$.
The sets $\varphi^{-1}(c_1),\varphi^{-1}(c_2),\ldots,\varphi^{-1}(c_k)$ are known as the \textit{colour classes} of the colouring $\varphi$, and $\varphi$ partitions the vertex set of $G$ into its nonempty colour classes.
We call $\varphi$ a \textit{balanced colouring} when, for all integers $i$ and $j$, $\big\lvert\lvert\varphi^{-1}(c_i)\rvert -\lvert\varphi^{-1}(c_j)\rvert\big\rvert \leq 1$. If a weak $k$-colouring of $\cD$ exists, we call that decomposition \textit{weakly $k$-colourable}, and if $k$ is the smallest number for which $\cD$ is weakly $k$-colourable, we say $\cD$ is \textit{weakly $k$-chromatic} and the \textit{weak chromatic number} of $\cD$, denoted by $\chi(\cD)$, is $k$. Throughout this paper we only consider weak colourings; therefore, for simplicity, the adjective ``weak'' and the adverb ``weakly'' will be omitted hereafter.

The history of colouring cycle systems starts with Steiner triple systems, which also happen to be $3$-cycle systems. In 1982, de Brandes, Phelps, and Rödl proved that for every integer $k\geq 3$, one can construct a $k$-chromatic Steiner triple system of any sufficiently large admissible order $n$ \cite{deBrandesPhelpsRodl1982}. Afterwards, in 1999, Milici and Tuza showed the existence of a 2-chromatic $m$-cycle system for all $m>3$ and for infinitely many admissible orders \cite{MiliciTuza1996}. In the following decade, Burgess and Pike proved the existence of an $m$-cycle system with arbitrary chromatic number when $m$ is even \cite{MR2185517, MR2378931}. Furthermore, in 2010 Horsley and Pike extended these results by establishing that a $k$-chromatic $m$-cycle system exists for all $(k,m)\neq (2,3)$ and for every large admissible order \cite{MR2677684}.

One may notice that the count of different ways to colour a cycle decomposition does not increase when we reduce the number of colours. Let $\cD$ be an $m$-cycle decomposition of a graph $G$ on $n>1$ vertices, and $\varphi:V(G)\rightarrow \{c_1,\ldots,c_k\}$ be a $k$-colouring of $\cD$ with the colour classes $C_1,\dots,C_k$. The decomposition $\cD$ is said to be \textit{uniquely $k$-colourable} if any other $k$‑colouring of $\cD$, namely $\psi$, yields the same family of colour classes, i.e., there exists a permutation map $\sigma_{\psi}:\{c_1,\ldots,c_k\}\rightarrow \{c_1,\ldots,c_k\}$ such that $\varphi=\sigma_{\psi}\circ\psi$. Equivalently, no two distinct $k$‑colourings of the $k$‑colourable $m$-cycle decomposition $\cD$ can produce different partitions of $V(G)$ via their colour classes. It is worth mentioning that since $\lvert V(G)\rvert >1$ and $m>2$, if $\cD$ is a uniquely $k$-colourable $m$-cycle decomposition of $G$, then this decomposition is also $k$-chromatic. If $k\geq n$, it is easy to see that $\cD$ is neither uniquely $k$-colourable, nor $k$-chromatic. If $k<n$, observe that any $k$-colouring of $\cal{D}$ that is not surjective cannot be unique (because we can re-colour any vertex of a colour class of size at least 2 with any unused colour, thereby obtaining a different $k$-colouring) and so any unique colouring of $\cal{D}$ must use all $k$ colours and therefore is a surjective colouring.  It follows that if $\cal{D}$ is uniquely $k$-colourable, then $\cal{D}$ must be $k$-chromatic (because any $(k-1)$-colouring would correspond to a $k$-colouring that is not surjective, indicating that there is a different $k$-colouring other than the supposedly unique one).

Any nontrivial Steiner triple system has chromatic number at least 3 \cite{MR309754} and thus there are no uniquely 2-colourable Steiner triple systems of order $n>3$. In 2003, Forbes built upon a few known uniquely $3$‑colourable Steiner triple systems mentioned in \cite{ColbournHaddadLinek1997,deBrandesPhelpsRodl1982,MR1961750} and showed that for every admissible order $n\ge25$, there exists a uniquely $3$‑colourable Steiner triple system with a balanced colouring and also one with no balanced colouring \cite{MR1953280}.
Every Steiner triple system of order $n$, $3<n\leq 19$, is known to be 3-chromatic \cite{MR2661401,deBrandesPhelpsRodl1982}. As mentioned in \cite{MR1961750}, there are no uniquely $3$‑colourable Steiner triple systems with an order smaller than 19. For those of order 19, which were enumerated in 2004 by Kaski and {\"O}sterg{\aa}rd \cite{MR2059752}, we have computationally verified that none of them are uniquely $3$‑colourable. As there are 14,796,207,517,873,771 non-isomorphic Steiner triple systems of order 21 \cite{MR4632477}, it is currently infeasible to check their 3-colourings.

That is where the prior knowledge of uniquely colourable cycle decompositions ends. In this paper, we push it further by affirmatively answering the questions ``Do any uniquely 2‑colourable 4‑cycle systems exist?''\ and ``Do any uniquely 2-colourable 4-cycle decompositions of cocktail party graphs exist?''. To achieve these goals, we first introduce the concept of an alternating colouring.

Let $G$ be a multipartite graph with parts $V_1,V_2,\ldots,V_h$\ such that the vertices within each part are totally ordered, where we assume each $V_i$ is the chain of vertices $v_{i,1},v_{i,2},\ldots,v_{i,\lvert V_i\rvert}$, i.e., the ordering is such that $v_{i,j}<v_{i,j+1}$ for all $j$, $1\leq j<\lvert V_i\rvert$. Suppose $\cD$ is an $m$-cycle decomposition of $G$. We call a $2$-colouring $\varphi$ of $\cD$ a \textit{2-alternating-colouring}, or just \textit{alt-colouring}, if within each part $V_i$ and for all $j\in\{1,2,\ldots,\lvert V_i\rvert -1\}$, $\varphi( v_{i,j})\neq \varphi(v_{i,j+1})$, i.e., every two consecutive vertices have different colours. We call an alt-colouring $\varphi$ of $\cD$ a \textit{super-2-alternating-colouring}, or just \textit{super-alt-colouring}, if for all $i\in\{1,\ldots,h-1\}$, $\varphi( v_{i,\lvert V_i\rvert})\neq \varphi(v_{i+1,1})$, i.e., the last vertex of each part and the first vertex of the next part have different colours. Figure~\ref{fig:Alt-colouring} illustrates two alt-colourings, one of which is a super-alt-colouring. A decomposition $\cD$ is called \textit{alt-colourable} if it has an alt-colouring. A decomposition $\cD$ is called \textit{exclusively alt-colourable} if $\cD$ admits a super-alt-colouring and, moreover, every $2$-colouring of $\cD$ is an alt-colouring.
We call a $2$-colouring $\varphi$ of $\cD$ a \textit{partially 2-alternating-colouring}, or just \textit{partially alt-colouring}, if within at least one $V_i$ with at least two vertices and for all $j\in\{1,2,\ldots,\lvert V_i\rvert -1\}$, $\varphi( v_{i,j})\neq \varphi(v_{i,j+1})$, i.e., within at least one part of $G$ with at least two vertices, every two consecutive vertices have different colours. A decomposition $\cD$ is called \textit{partially alt-colourable} if it has a partially alt-colouring. A decomposition $\cD$ is called \textit{exclusively partially alt-colourable} if every $2$-colouring of $\cD$ is a partially alt-colouring. The complete tripartite graph with parts of size 2, decomposed into its three complete bipartite sections (see Figure~\ref{fig:E-Alt-colouring}) is the smallest nontrivial example of an exclusively partially alt-colourable 4-cycle decomposition.
\begin{figure}
        \centering
\begin{tikzpicture}[scale=1.2]

\begin{scope}

  \foreach \i in {1,...,3} {
    \foreach \j in {\i,...,3} {
    \foreach \k in {1,...,4} {
    \foreach \g in {1,...,4} {
    \draw[gray!60] ($2*(\k,0)-(2,\i)$)--($2*(\g,-0.5)-(2,\j)$);
  }
  }
  }
  }
  
	\foreach \i in {1,3} {
  \foreach \j in {3} {
  \fill ($2*(\i,0)-(2,\j)$) circle (7pt);
  \node[white,scale=0.75]  at ($2*(\i,0)-(2,\j)$) {$v_{\j,\i}$};
	}
	}  
	
	\foreach \i in {2,4} {
  \foreach \j in {3} {
  \fill[white] ($2*(\i,0)-(2,\j)$) circle (7pt);
  \node[scale=0.75]  at ($2*(\i,0)-(2,\j)$) {$v_{\j,\i}$};
	}
	}  
	
	\foreach \i in {2,4} {
  \foreach \j in {1,2,4} {
  \fill ($2*(\i,0)-(2,\j)$) circle (7pt);
  \node[white,scale=0.75]  at ($2*(\i,0)-(2,\j)$) {$v_{\j,\i}$};
	}
	}  
	
	\foreach \i in {1,3} {
  \foreach \j in {1,2,4} {
  \fill[white] ($2*(\i,0)-(2,\j)$) circle (7pt);
  \node[scale=0.75]  at ($2*(\i,0)-(2,\j)$) {$v_{\j,\i}$};
	}
	}

  \foreach \i in {1,...,4} {
  \foreach \j in {1,...,4} {
  \draw ($2*(\i,0)-(2,\j)$) circle (7pt);
	}
	}

  \node[above=6pt] at (3,-5.25) {An alt-colouring of $\mathcal{D}$};
\end{scope}

\begin{scope}[xshift=7cm]

  \foreach \i in {1,...,3} {
    \foreach \j in {\i,...,3} {
    \foreach \k in {1,...,4} {
    \foreach \g in {1,...,4} {
    \draw[gray!60] ($2*(\k,0)-(2,\i)$)--($2*(\g,-0.5)-(2,\j)$);
  }
  }
  }
  }

  	\foreach \i in {1,3} {
  \foreach \j in {1,...,4} {
  \fill ($2*(\i,0)-(2,\j)$) circle (7pt);
  \node[white,scale=0.75]  at ($2*(\i,0)-(2,\j)$) {$v_{\j,\i}$};
	}
	}  
	
	\foreach \i in {2,4} {
  \foreach \j in {1,...,4} {
  \fill[white] ($2*(\i,0)-(2,\j)$) circle (7pt);
  \node[scale=0.75]  at ($2*(\i,0)-(2,\j)$) {$v_{\j,\i}$};
	}
	}

  \foreach \i in {1,...,4} {
  \foreach \j in {1,...,4} {
  \draw ($2*(\i,0)-(2,\j)$) circle (7pt);
	}
	}

  \node[above=6pt] at (3,-5.25) {A super-alt-colouring of $\mathcal{D}$};
\end{scope}

\end{tikzpicture}

        \caption{Examples of different alt-colourings. This figure shows a complete quadripartite graph with parts of size 4; here we suppose $\cD$ is a 4-cycle decomposition of this graph, and the colourings shown in the figure are 2-colourings of $\cD$.}
        \label{fig:Alt-colouring}
\end{figure}

\begin{figure}
        \centering
\begin{tikzpicture}[scale=1.75]

\coordinate (v11) at (70:2);
\coordinate (v12) at (110:2);

\coordinate (v21) at (-170:2);
\coordinate (v22) at (-130:2);

\coordinate (v31) at (-50:2);
\coordinate (v32) at (-10:2);

\draw[thick,dash dot] (v21)--(v32)--(v22)--(v31)--cycle;
\draw[thick] (v21)--(v12)--(v22)--(v11)--cycle;
\draw[thick,dotted] (v11)--(v32)--(v12)--(v31)--cycle;

\fill[white] (v11) circle (3pt);
\fill[white] (v12) circle (3pt);
\fill (v21) circle (3pt);
\fill[white] (v22) circle (3pt);
\fill (v31) circle (3pt);
\fill (v32) circle (3pt);
  
  \foreach \i in {1,2,3} {
  \foreach \j in {1,2} {
  \draw (v\i\j) circle (3pt);
	}
	}

\end{tikzpicture}

        \caption{An exclusively partially alt-colourable 4-cycle decomposition of the complete tripartite graph with parts of size 2; at least one part must always be non-monochromatic.}
        \label{fig:E-Alt-colouring}
\end{figure}

Although being exclusively alt-colourable does not necessarily imply possessing a unique 2-colouring, an exclusively alt-colourable cycle decomposition yields predictable colour patterns, which we will use to our advantage.
We will constructively prove the existence of exclusively alt-colourable 4-cycle decompositions of several complete multipartite graphs and so we have the following theorem.

\begin{restatable}{theorem}{EAfourCD}
\label{thm: EA4CD}
If $h\geq 6$ and for each $i\in\{1,\ldots,h\}$, $\l_i\geq 2$ is an integer, then there exists an exclusively alt-colourable 4-cycle decomposition of $K_{4{\l_1},\ldots,4{\l_h}}$, the complete $h$-partite graph with parts of size $4\l_1,\ldots,4\l_h$.
\end{restatable}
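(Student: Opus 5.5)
We build the decomposition pair by pair. For every pair of parts $V_i,V_j$ with $i<j$ we choose a 4-cycle decomposition of the complete bipartite graph $K_{4\l_i,4\l_j}$ between them. The union over all $\binom{h}{2}$ pairs is a 4-cycle decomposition of $K_{4\l_1,\ldots,4\l_h}$. The plan has two parts. First, design a small gadget that forces alternation inside one part. Second, arrange the gadgets so that every 2-colouring of the whole decomposition must alternate in every part.

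\textbf{The forcing gadget.} Write $V_i=\{v_{i,1},\ldots,v_{i,4\l_i}\}$ and cut it into consecutive blocks of size 4. Consider two consecutive vertices $x=v_{i,k}$, $x'=v_{i,k+1}$ and any two vertices $a,b$ of another part. The cycle $(x,a,x',b)$ already forces something: if $x$ and $x'$ share a colour, then $a$ and $b$ cannot both have that colour. Single cycles of this kind are too weak, so we combine them. The pattern of Figure~\ref{fig:E-Alt-colouring} shows how three parts of size 2 force at least one part to be non-monochromatic. We therefore decompose each $K_{4,4}$ between a 4-block of $V_i$ and a 4-block of $V_j$ into four 4-cycles of the form $(x,a,x',b)$, where $\{x,x'\}$ runs over pairs of the block of $V_i$ and $\{a,b\}$ over pairs of the block of $V_j$. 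We would choose the pairings so that both $\{v_{i,k},v_{i,k+1}\}$ and the cross pairs $\{v_{i,k},v_{i,k+2}\}$ occur. Call an adjacent pair \emph{bad} if it is monochromatic. Combining these 2-block gadgets over triples of parts gives a propagation rule: if some adjacent pair in $V_i$ is bad, the cycles it lies in force specific colour patterns in other parts, and those patterns create bad pairs elsewhere.

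\textbf{Global argument.} Suppose $\varphi$ is a 2-colouring that is not an alt-colouring, so some $V_i$ contains a bad pair. We propagate bad pairs through the pair-designs. The hypothesis $h\ge 6$ is used here: with many parts, the chain of forced patterns can be routed so that eventually a pair of blocks is forced to be monochromatic on all eight vertices. That makes some 4-cycle in their $K_{4,4}$ monochromatic, a contradiction. The hypothesis $\l_i\ge 2$ gives each part at least two blocks. This lets us use the boundary between blocks, namely the pair $v_{i,4t},v_{i,4t+1}$, which must be covered by cycles joining different blocks. It also lets us let the remaining blocks absorb the leftover edges in a standard way, since any $K_{4a,4b}$ decomposes into 4-cycles. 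To make the edges between different blocks carry forcing cycles, we would use cycles of the form $(v_{i,4t},a,v_{i,4t+1},b)$, with $a,b$ ranging over a fixed reference block of each other part.

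\textbf{Existence of a super-alt-colouring.} Colour $v_{i,j}$ by the parity of $j+\sum_{r<i}4\l_r$. Since every $4\l_r$ is even, this is the global alternating order, so it is a super-alt-colouring. We must check that no cycle is monochromatic. Every cycle used has the form $(x,a,x',b)$ with $x,x'$ consecutive in their part, or at distance 2 within a block, or with $a,b$ consecutive. So we include only cycles in which at least one of the two vertex pairs is consecutive. In the leftover parts this means decomposing each $K_{4,4}$ as $\{v_{i,k},v_{i,k+1}\}\times\{\cdot,\cdot\}$ pieces, which keeps them non-monochromatic.

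The main obstacle is the global propagation argument: showing that a single bad pair is impossible, rather than only that a whole monochromatic block is impossible. We would first do a careful case analysis on a six-part core, $K_{8,8,8,8,8,8}$, by fixing an explicit design there, possibly verified by computer. We would then attach the extra blocks and extra parts using cycles that tie each new adjacent pair to the already-forced alternating core, so that it inherits alternation.
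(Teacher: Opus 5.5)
\textbf{There is a genuine gap.} The step that carries all the content of the theorem is never supplied: you need to show that every 2-colouring of the decomposition alternates in every part, and you do not. You do not fix the decomposition, and the ``propagation rule'' is only asserted. The six-part core is deferred to a computer check of an unspecified design, and even granting such a check, it says nothing about general $h\ge 6$ and $\ell_i\ge 2$ without a separate argument.

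The mechanism you sketch also does not match how these constraints behave. A cycle $(x,a,x',b)$ in which $\{x,x'\}$ is monochromatic of colour $c$ only forbids $\{a,b\}$ from being monochromatic of colour $c$. It never forces a pair to be monochromatic, and it is satisfied if $\{a,b\}$ is monochromatic of colour $1-c$. So knowing that some part contains a bad pair of one unknown colour gives you almost no leverage, and you show no chain ending in eight monochromatic vertices.

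There is also a structural problem with the blocks. If each $K_{4,4}$ between blocks is decomposed internally, the boundary pairs $\{v_{i,4t},v_{i,4t+1}\}$ never occur as opposite vertices of a cycle. Then a colouring that alternates inside each block but shifts phase between blocks is not excluded. Your patch with cycles $(v_{i,4t},a,v_{i,4t+1},b)$ means abandoning the block-by-block decomposition on those edges, and you do not say what replaces it.

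\textbf{The missing idea} is a way to guarantee bad pairs of \emph{both} colours, and then to propagate \emph{alternation} rather than badness. The paper works with three whole-part pairings:
\begin{itemize}
\item consecutive pairs starting at odd positions ($\alpha$-sets);
\item consecutive pairs starting at even positions, including the wrap-around pair $\{v_{4\ell},v_1\}$ ($\beta$-sets);
\item distance-two pairs inside each block of four ($\gamma$-sets).
\end{itemize}
It uses only product decompositions $\mathrm{BD}(\mathcal{A},\mathcal{B})$ of the bipartite pieces.

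Counting colour-$0$ vertices through the $\alpha$-partition and through the $\beta$-partition gives $a_0+b_1=a_1+b_0$ (Lemma~\ref{lem: a b TPT}). So a part with an $\alpha$-twin-pair of colour $c$ but no $\beta$-twin-pair of colour $c$ has $\alpha$-twin-pairs of both colours. That forces its partner under the $\alpha$-product to be $\alpha$-free. Playing this off in four parts ($\alpha$-products within $\{0,1\}$ and within $\{2,3\}$, $\beta$-products across) shows that some part must alternate (Lemma~\ref{lem: a b dec}).

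Alternation is then spread by the $\gamma$-decomposition. In an alternating part the $\gamma$-sets are twin pairs of both colours. Pairing $\Gamma_{0,0},\Gamma_{0,1}$ with the $\alpha$-sets of the next part, and the remaining $\gamma$-sets (nonempty since $\ell\ge 2$) with its $\beta$-sets, forces that part to alternate. Since one does not know which core part alternates, these links are oriented along a directed triangle: core, new part, new part, back to core. This is where $h\ge 4+2$ comes from.

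Your closing idea of tying new adjacent pairs to an already alternating core is close in spirit to this last step. However, it presupposes exactly the forcing argument for the core that your proposal does not provide.
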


Embedding exclusively alt-colourable 4-cycle decompositions of complete multipartite graphs into 4-cycle decompositions of complete graphs and cocktail party graphs will enable us to prove the main results of this paper, which we now state.

\begin{restatable}{theorem}{UtwofourCS}
\label{thm: U24CS}
For all admissible $n \geq 49$, there exists a uniquely 2-colourable 4-cycle system of order $n$.
\end{restatable}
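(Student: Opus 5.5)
We plan to embed an exclusively alt-colourable decomposition from Theorem~\ref{thm: EA4CD} into a 4-cycle system of $K_n$. Write $n = 8t+1$ with $t\geq 6$. Choose $h=6$ parts of sizes $4\ell_1,\ldots,4\ell_6$ with each $\ell_i\geq 2$ even, so that each part has size divisible by $8$ and $\sum 4\ell_i = 8t$. This works for all $t\geq 6$: take $\ell_i=2$ for $i\leq 5$ and $\ell_6 = 2t-10\geq 2$. Place the $8t$ vertices in the six parts and add a point $\infty$. Then $K_n$ is the edge-disjoint union of $K_{4\ell_1,\ldots,4\ell_6}$ and the six graphs $K_{\{\infty\}\cup V_i}\cong K_{8s_i+1}$, where $|V_i|=8s_i$.

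For the multipartite section we use the exclusively alt-colourable decomposition $\cD_0$ given by Theorem~\ref{thm: EA4CD}. For each $K_{\{\infty\}\cup V_i}$ we use a 4-cycle system of order $8s_i+1$, which exists since the order is $\equiv 1 \pmod 8$. We have freedom in how these systems are labelled relative to the fixed orderings $v_{i,1}<\cdots<v_{i,|V_i|}$, and the main work is choosing these labellings so that the union has a unique 2-colouring.

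Existence of a colouring is the first step. Take a super-alt-colouring $\varphi$ of $\cD_0$. It alternates along each chain and across consecutive parts, so the whole sequence $v_{1,1},\ldots,v_{6,|V_6|}$ alternates. Since each $|V_i|$ is even, every part has exactly half of its vertices in each colour, and in each part the odd-indexed and even-indexed vertices have opposite colours. Colour $\infty$ arbitrarily, say with colour $c_1$. For each small system on $\{\infty\}\cup V_i$ we need no 4-cycle to be monochromatic. We will choose the labelling of the system on $\{\infty\}\cup V_i$ so that every cycle contains two vertices $v_{i,j}, v_{i,j'}$ with $j\not\equiv j' \pmod 2$. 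Then every cycle is bichromatic under any alt-colouring, whatever colour $\infty$ receives. For example, each cycle could contain an edge $v_{i,j}v_{i,j+1}$, or two vertices of opposite parity. Since the system is arbitrary up to relabelling, we can try a cyclic or 1-rotational construction on $\Z_{8s}\cup\{\infty\}$ in which each base cycle uses both parities, and then verify or adjust the small cases directly.

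Uniqueness is the second step. Let $\psi$ be any 2-colouring of the whole system. Its restriction to $\cD_0$ is a 2-colouring of $\cD_0$, so by exclusivity it is an alt-colouring. Hence on each part $V_i$, $\psi$ is determined up to swapping the two colours within that part, giving $2^6$ possibilities before $\infty$ is coloured. What remains is to force the relative phases between parts and the colour of $\infty$. The super-alt condition is only one of the allowed alt patterns for $\cD_0$, so this forcing must come from the small systems. The plan is to modify the construction so that some cycles straddle parts. Instead of taking all the $K_{\{\infty\}\cup V_i}$ separately, we will swap in a few cycles. A concrete gadget is to take a small set of cycles of $\cD_0$ between $V_i$ and $V_{i+1}$ together with cycles inside the $\{\infty\}\cup V_i$, and replace them by a different 4-cycle decomposition of the same edge set. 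The new cycles should be chosen so that, given alternation within the parts, some cycle becomes monochromatic unless $\psi(v_{i,|V_i|})\neq\psi(v_{i+1,1})$; that is, a cycle whose vertices are, for instance, $v_{i,|V_i|-1}, v_{i+1,2}, \infty, \ldots$. We must also ensure that the modified cycles do not destroy the property that every 2-colouring of the multipartite part is alternating. To keep this safe, we can instead keep $\cD_0$ intact and place all forcing cycles within the $K_{\{\infty\}\cup V_i}$. A single vertex $\infty$ joins all parts, so we can force the colour of $\infty$ relative to each part's phase. Suppose the system on $\{\infty\}\cup V_i$ contains a cycle such as $(\infty, v_{i,1}, v_{i,3}, v_{i,5})$, whose non-$\infty$ vertices all have the same parity. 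This forces $\psi(\infty)\neq\psi(v_{i,1})$. Imposing this for every $i$ forces $\psi(v_{i,1})$ to be the same for all $i$, which, since the parts have even size, is exactly the super-alt phase. So all that needs to be done is to find, for each $s\geq1$, a 4-cycle system of $K_{8s+1}$ on $\{\infty\}\cup V_i$ in which every cycle avoiding $\infty$ meets both parities, every cycle through $\infty$ has its three other vertices not all of one parity except for at least one cycle of the form above, and no cycle is monochromatic under the resulting colouring.

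The main obstacle is this last design problem: building a 4-cycle system of order $8s+1$ with prescribed parity behaviour. For $s=1$ (order 9) it is a finite search. For larger $s$ we would build it recursively from $K_9$ pieces and parity-mixed $K_{8,8}$ decompositions. The bound $n\geq 49$ suggests that the requirement $h\geq 6$ with $\ell_i\geq2$ is what drives the threshold, and our parameter choice matches it exactly at $t=6$.
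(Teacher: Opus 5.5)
Your final strategy is the paper's own. You keep the exclusively alt-colourable decomposition on the multipartite part and attach a 4-cycle system on $\{\infty\}\cup V_i$ to each part. Each of these contains a cycle made of $\infty$ and three vertices of $V_i$ of the same parity; the paper's key cycle is $(v_{i,1},v_{i,5},v_{i,3},s_1)$, exactly the vertex set you propose. So $\infty$ synchronises the phases of all parts, while every other cycle is bichromatic under the parity colouring. Your uniqueness argument for this set-up is correct.

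The proof is nevertheless incomplete, because the object it rests on is never produced. You call it ``the main obstacle'' and say that for order $9$ it is ``a finite search'', but an unexecuted search is not an existence proof. The paper supplies the gadget explicitly (Lemma~\ref{lem: flower form 1}, first row of Table~\ref{tbl: flower form}). With $a_j=v_{i,2j-1}$ and $b_j=v_{i,2j}$, the cycles $(a_1,a_3,a_2,\infty)$, $(a_2,a_4,a_3,b_3)$, $(a_3,b_1,a_4,b_4)$, $(a_4,b_2,b_1,a_1)$, $(b_1,\infty,b_2,a_2)$, $(b_2,b_3,\infty,a_3)$, $(\infty,b_4,b_3,a_4)$, $(b_3,a_1,b_4,b_1)$, $(b_4,a_2,a_1,b_2)$ decompose the $K_9$ on $\{\infty,a_1,\dots,a_4,b_1,\dots,b_4\}$. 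Every one of them meets both $\{a_1,\dots,a_4\}$ and its complement, and the first is your forcing cycle. With this inserted, your argument goes through.

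Two smaller points. First, your existence paragraph colours $\infty$ arbitrarily and asks every cycle to contain vertices of both parities, which is incompatible with the forcing cycle. The correct requirement is that no cycle lies inside the odd-indexed vertices of $V_i$, and none lies inside the even-indexed vertices together with $\infty$, with $\infty$ coloured like the even-indexed vertices. You reach this later, but the two paragraphs should be reconciled.

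Second, taking six parts, one of size $8(t-5)$, obliges you to solve the design problem on $K_{8s+1}$ for every $s$. Your recursion does work: cut that part into consecutive blocks of $8$, and use the order-$9$ gadget on each block with $\infty$. Then decompose each $K_{8,8}$ between blocks as $\mathrm{BD}(\mathcal{A},\mathcal{B})$ with $\mathcal{A}$ consisting of consecutive pairs, so every such cycle has two vertices of opposite parity. It is simpler, though, to do as the paper does and take $t\geq 6$ parts of size $8$ (Theorem~\ref{thm: EA4CD} with every $\ell_i=2$), so that only the order-$9$ gadget is ever needed.
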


\begin{restatable}{theorem}{UtwofourCD}
\label{thm: U24CD}
For all admissible $n \geq 50$, there exists a uniquely 2-colourable 4-cycle decomposition of the cocktail party graph $K_n- I$.
\end{restatable}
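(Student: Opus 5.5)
We prove Theorem~\ref{thm: U24CD} by embedding the decompositions of Theorem~\ref{thm: EA4CD} into $K_n-I$ and then adding a few extra vertices and cycles that rigidify the colouring. Admissible means $n$ is even. We write $n = 4L + r$ with a small fixed residue $r$ (say $r\in\{2,4,6,8\}$, or $r$ chosen so that $L$ is split into $h\ge 6$ parts with each $\l_i\ge 2$, which requires $4L\ge 48$). The vertex set is a set $X$ of $4L$ vertices, partitioned into parts $V_1,\dots,V_h$ of sizes $4\l_i$, together with a small set $R$ of $r$ extra vertices.

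\textbf{Step 1: the multipartite edges.} On $X$ we place the exclusively alt-colourable $4$-cycle decomposition $\cD_0$ of $K_{4\l_1,\dots,4\l_h}$ given by Theorem~\ref{thm: EA4CD}.

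\textbf{Step 2: the remaining edges.} Inside each part we decompose $K_{4\l_i}-I_i$ together with the edges to $R$. The part $V_i\cup R$ induces $K_{4\l_i+r}$ minus a $1$-factor, restricted so that the edges of $R$ are used only once. We fill this with $4$-cycles in a standard way, for instance by decomposing $K_{4\l_i+r}-I$ on $V_i\cup R$ and taking a fixed decomposition of $K_r-I$ on $R$ as a common sub-design ("hole" construction with hole $R$). Crucially, we choose the removed $1$-factor $I_i$ on $V_i$ to be $\{v_{i,2j-1}v_{i,2j}\}$. We also choose these inner cycles so that each of them contains two vertices of equal parity position, forcing no new colour constraint that conflicts with alternation.

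\textbf{Step 3: uniqueness.} Let $\psi$ be any $2$-colouring of the whole decomposition. Its restriction to $X$ is a $2$-colouring of $\cD_0$, so by exclusivity it is an alt-colouring. Each $V_i$ is therefore coloured alternately, which leaves only a choice of phase for each part: $2^h$ candidate colourings of $X$, up to the colours of $R$. We must then design the hole cycles through $R$ (and possibly a few inner cycles of the form $v_{i,a}\,v_{i,b}\,x\,y$) so that they kill all but one phase pattern. Specifically, we want cycles whose vertices are three same-phase vertices of $V_i$ plus one vertex of $R$; these force that $R$-vertex to take the colour opposite to a given phase class. Linking parts $V_i$ and $V_{i+1}$ through a common $R$-vertex then forces the super-alt relation $\psi(v_{i,|V_i|})\ne\psi(v_{i+1,1})$. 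The super-alt-colouring from Theorem~\ref{thm: EA4CD}, extended to $R$, is then checked to be a valid colouring, so the colouring exists and is unique.

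\textbf{Main obstacle.} The hardest part is Step 3: building the hole/inner decompositions so that the edges are exactly covered with the prescribed $1$-factor removed, while the inner cycles both allow the super-alt-colouring and forbid every other phase choice. With only a small $R$, every part must share the few $R$-vertices. I would first try $r=2$ with $R=\{\infty_1,\infty_2\}$ paired in $I$, using cycles $\infty_1\,v_{i,a}\,\infty_2\,v_{i,b}$ with $a,b$ of opposite parity. Such a cycle is non-monochromatic automatically under alternation, so on its own it imposes nothing and must be supplemented. I would therefore include cycles $\infty_1\,v_{i,a}\,v_{i,b}\,v_{i,c}$ whose three $V_i$-vertices are $a,b,c$ of the same parity with the edges between them non-$I$. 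Since same-parity vertices within a part share a colour under alternation, such a cycle forces $\psi(\infty_1)$ to differ from that colour. Choosing the parity appropriately per part then ties all phases to $\psi(\infty_1)$, which pins down the colouring. Small residual cases near $n=50$ are handled by the explicit choice $h=6$, $\l_i=2$, with adjusted $r$.
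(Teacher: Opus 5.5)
You have the same architecture as the paper: an exclusively alt-colourable decomposition on the multipartite part, a set $R$ of extra vertices shared by every part, and, for each part, a decomposition of the cocktail party graph on $V_i\cup R$ (with the edges inside $R$ used once) containing key cycles that make $R$ synchronise the phases. The paper's key cycles $(\mathtt{a_1},\mathtt{a_2},\mathtt{a_3},\mathtt{s_2})$ and $(\mathtt{b_1},\mathtt{b_2},\mathtt{b_3},\mathtt{s_1})$ in Table~\ref{tbl: flower form} have exactly the shape you describe.

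\textbf{The gap.} The step you call the main obstacle is the entire combinatorial content, and you never carry it out. You need $r=2$ when $n\equiv 2\pmod 4$ and $r=4$ when $n\equiv 0\pmod 4$. For each, you must exhibit a 4-cycle decomposition of the cocktail party graph on $V_i\cup R$, minus a fixed decomposition on $R$, satisfying two conditions:
\begin{enumerate}
\item it contains a forcing cycle for \emph{every} vertex of $R$;
\item every cycle is bichromatic under the intended colouring.
\end{enumerate}
Neither is automatic. The obvious decomposition into the $K_{2,2}$'s between pairs of $I$-edges is bichromatic but forces nothing. As soon as you insert key cycles, which consume same-parity edges $v_{i,a}v_{i,b}$ and edges to $R$, the leftover graph is no longer a union of such $K_{2,2}$'s. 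It then has to be re-decomposed into bichromatic 4-cycles by hand. The paper does this explicitly in Table~\ref{tbl: flower form}. For arbitrarily large shared sets it uses the recursion in Lemma~\ref{lem: flower form 2}, which places a copy of $\widehat{\cD}_{t-4}$ on $S_t$ so that $S$ is itself anchored. Without such a construction, both the existence of a valid colouring and its uniqueness are asserted rather than proved.

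\textbf{Two smaller slips.} First, in Step 2 the validity condition is backwards. A cycle is safe under alternation when it contains two vertices of \emph{opposite} parity (different intended colours), not two of equal parity. Second, in your $r=2$ sketch, key cycles through $\infty_1$ alone leave $\infty_2$ free. Cycles $\infty_1\,v_{i,a}\,\infty_2\,v_{i,b}$ with $a,b$ of opposite parity are bichromatic whatever colour $\infty_2$ gets, so you also need a cycle such as $\infty_2\,v_{i,2}\,v_{i,4}\,v_{i,6}$.

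\textbf{How your route compares once completed.} Rows 2 and 3 of Table~\ref{tbl: flower form} give the base gadgets you need for parts of size $8$. With those, your route works and differs from the paper's only in bookkeeping. The paper fixes six parts of size $8$ and lets the shared set grow to $n-48$ vertices. You fix $\lvert R\rvert\in\{2,4\}$ and grow the parts instead, which needs Theorem~\ref{thm: EA4CD} with unequal part sizes (available). To extend a gadget to a larger part, add one 4-cycle $(y,x,y',x')$ for every unordered pair of $I$-edges $\{y,y'\}$, $\{x,x'\}$ at least one of which lies among the new vertices of $V_i$. These cycles are automatically bichromatic under alternation and impose no new constraints.
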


Section~\ref{sec2} of this paper is entirely dedicated to the proof of Theorem~\ref{thm: EA4CD}. The methods and notation used in the second section of this paper are independent of Section~\ref{sec3}, where we prove Theorem~\ref{thm: U24CS} and Theorem~\ref{thm: U24CD}, so the reader can read the latter section as a standalone, provided that they accept the validity of Theorem~\ref{thm: EA4CD}. Lastly, Section~\ref{sec4} discusses future directions after this paper.

\section{Exclusively alt-colourable 4-cycle decompositions}
\label{sec2}

In this section we prove Theorem~\ref{thm: EA4CD}. To do so, we define a total ordering for the parts of a complete multipartite graph and we will explicitly form an exclusively alt-colourable 4-cycle decomposition for that graph.

Let us start by introducing an ordered set for the labelling of each part. Consider the set of triples with labels in $\Z_{\l }\times\Z_2\times\Z_2$ for a positive integer $\l$. We denote this set of points by $L_{\l,2,2}$. For the sake of clarity, for any positive integer $n$, $\Z_n$ consists of the elements $0,\ldots,(n-1)$ (with the natural order $0<1<\cdots<(n-2)<(n-1)$), but the usual algebraic ring operations of $\Z_n$ can be used if necessary (for example, $(n-1)+1=0$).
We set the primary order of $L_{\l,2,2}$ to be the lexicographic order that inherits the order of $\Z_{\l }$ and the order of $\Z_2$. So, for all $(a_1,a_2,a_3)\in L_{\l,2,2}$ and $(b_1,b_2,b_3)\in L_{\l,2,2}$, $(a_1,a_2,a_3)\prec(b_1,b_2,b_3)$ if and only if there exists $i\in\{1,2,3\}$ such that $a_i<b_i$, and $a_j=b_j$ whenever $j<i$. We denote the \textit{successor} of $x\in L_{\l,2,2}$ by  $x^{\triangleright}$, which is the element of $ \L_{\l,2,2}$ such that $x$ and $x^{\triangleright}$ are consecutive and $x\prec x^{\triangleright}$. We also adopt the convention $(\l-1,1,1)^{\triangleright}=(0,0,0)$. For a subset $A\subseteq \L_{\l,2,2}$, we denote the set $\{x^\triangleright\mid x\in A\}$ by $A^\triangleright$.

Let $A=\{x,y\}$ be a pair of distinct points in $L_{\l,2,2}$.
If $y=x^{\triangleright}$ and $x\in \Z_{\l }\times\Z_2\times\{ 0\}$, then we call $A$ an \textit{$\a$-set}.
If $y=x^{\triangleright}$ and $y\in \Z_{\l }\times\Z_2\times\{ 0\}$, then we call $A$ a \textit{$\b$-set}.
If $y-x=(0,1,0)$, i.e., $A=\{(i,0,c),(i,1,c)\}$ for some $i\in\Z_{\l}$ and $c\in\Z_2$, then we call $A$ a \textit{$\g$-set}. We denote the $\g$-set $\{(i,0,c),(i,1,c)\}$ by $\G_{i,c}$.

One can observe that since we have an even number of points in $\L_{\l,2,2}$, the family of all $\a$-sets partitions $\L_{\l,2,2}$ into 2-subsets. This family forms what we call the \textit{$\a$-partition} of $\L_{\l,2,2}$. In the same fashion, we define the \textit{$\b$-partition} and the \textit{$\g$-partition} of $\L_{\l,2,2}$ as the family of all $\b$-sets and the family of all $\g$-sets, respectively, and they both partition $\L_{\l,2,2}$ into 2-subsets.

Now, view the elements of $\L_{\l,2,2}$ as vertices and suppose $\L_{\l,2,2}$ is a coclique on $4\l$ vertices, totally ordered as we have described. As we will see in Lemmas \ref{lem: a b TPT} and \ref{lem: a b g alt}, a 2-colouring of the vertices of $\L_{\l,2,2}$, namely $\varphi$, can interact curiously with some partitions of $\L_{\l,2,2}$. With respect to $\varphi$ we call a pair of vertices $\{x,y\}$ a \textit{twin-pair of type c} or a $\TPT{c}^{\varphi}$ if $\varphi(x)=\varphi(y)=c$. When the colouring is fixed, we instead use $\TPT{c}$ and when $\{x,y\}$ is an $\a$-set ($\b$-set, $\g$-set, resp.), we may use the term $\a$-twin-pair ($\b$-twin-pair, $\g$-twin-pair, resp.). We call $\varphi$ an \textit{$\a$-free} (\textit{$\b$-free}, \textit{$\g$-free}, resp.) colouring if it contains no $\a$-twin-pairs ($\b$-twin-pairs, $\g$-twin-pairs, resp.)\ of any type. With this terminology, we may assume $\L_{\l,2,2}$ is a single part of a multipartite graph and characterise an alt-colouring on the vertices of this graph as a 2-colouring that is both $\a$-free and $\b$-free, i.e., for all $x\in \L_{\l,2,2}$, $\varphi(x)\neq\varphi(x^{\triangleright})$. This assertion mostly follows by definition, but to check whether or not it is also true that $\varphi((\l-1,1,1))\neq\varphi((0,0,0))$ when $\varphi$ is an alt-colouring, we use the following lemma.

\begin{lemma}
    Suppose $\varphi:\L_{\l,2,2}\to \Z_2$ is a 2-colouring of the vertices of $\L_{\l,2,2}$. For every $c\in\Z_2$, let $a_{c}$ be the number of $\a$-$\TPT{c}$ and $b_{c}$ be the number of $\b$-$\TPT{c}$ in $\L_{\l,2,2}$. Then $a_{0}+b_{1}=a_{1}+b_{0}$.
    \label{lem: a b TPT}
\end{lemma}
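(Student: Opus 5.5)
The plan is a double-counting argument. Think of $\L_{\l,2,2}$ as the $4\l$ points listed in the cyclic order given by the successor map $x\mapsto x^{\triangleright}$, using the convention $(\l-1,1,1)^{\triangleright}=(0,0,0)$. The pairs $\{x,x^{\triangleright}\}$ are the edges of a Hamilton cycle on these points. In the listing $(0,0,0),(0,0,1),(0,1,0),\ldots$, the third coordinate alternates $0,1,0,1,\ldots$, since $4\l$ is even. So the edge $\{x,x^{\triangleright}\}$ is an $\a$-set exactly when $x$ has third coordinate $0$, and a $\b$-set exactly when $x^{\triangleright}$ has third coordinate $0$, i.e.\ when $x$ has third coordinate $1$; the wrap-around pair $\{(\l-1,1,1),(0,0,0)\}$ is a $\b$-set. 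Consequently the $\a$-sets and $\b$-sets alternate around the cycle. Because the $\a$-partition and the $\b$-partition are each perfect matchings of $\L_{\l,2,2}$, every point lies in exactly one $\a$-set and exactly one $\b$-set.

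Let $n_c=\lvert\varphi^{-1}(c)\rvert$. Let $m_\a$ be the number of bichromatic $\a$-sets and $m_\b$ the number of bichromatic $\b$-sets. I will count incidences (point of colour $c$, the $\a$-set containing it) in two ways. Each $\a$-$\TPT{c}$ contributes two such incidences. Each bichromatic $\a$-set contributes one, and $\a$-$\TPT{1-c}$ contributes none. Hence $n_c = 2a_c + m_\a$ for each $c\in\Z_2$. The same count with $\b$-sets gives $n_c = 2b_c + m_\b$.

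Subtracting the two equations for a fixed $c$ gives $2(a_c-b_c)=m_\b-m_\a$. The right-hand side does not depend on $c$, so $a_0-b_0=a_1-b_1$, which rearranges to $a_0+b_1=a_1+b_0$.

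There is no real obstacle here. The only point needing care is confirming that the wrap-around pair $\{(\l-1,1,1),(0,0,0)\}$ is a $\b$-set, so that the $\b$-sets genuinely partition $\L_{\l,2,2}$ and every point lies in exactly one of them. This follows directly from the definition, since $(0,0,0)\in\Z_\l\times\Z_2\times\{0\}$.
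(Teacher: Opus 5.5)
Your proposal is correct and is essentially the paper's double-counting argument. The paper counts $\lvert C_0\rvert$ once through the $\alpha$-partition and once through the $\beta$-partition, writing the numbers of non-twin pairs explicitly as $2\ell-(a_0+a_1)$ and $2\ell-(b_0+b_1)$; you instead keep the bichromatic counts $m_\alpha, m_\beta$ as unknowns and eliminate them by comparing the two colours, which is only a cosmetic difference.
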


\begin{proof}
    Let $C_0$ be the set of all vertices with colour $0$. We have a total of $\frac{4\l}{2}-(a_0+a_1)$ $\a$-sets and $\frac{4\l}{2}-(b_0+b_1)$ $\b$-sets that are not twin-pairs. Note that every non-twin-pair 2-subset of $\L_{\l,2,2}$ must contain exactly one vertex with colour 0. Hence, $$2(a_{0})+(2\l-(a_{0}+a_{1}))=\lvert C_0\rvert=2(b_{0})+(2\l-(b_{0}+b_{1}))\Rightarrow a_{0}+b_{1}=a_{1}+b_{0}.$$

\vspace*{-1.5\baselineskip}
\end{proof}

If $\varphi$ is an alt-colouring, then there are no $\a$-twin-pairs and no $\b$-twin-pairs except possibly $\{(\l-1,1,1),(0,0,0)\}$, which cannot be the only $\b$-twin-pair; otherwise, we would have $a_{0}+b_{1}\neq a_{1}+b_{0}$. As a result, the aforementioned characterisation of an alt-colouring is valid and we may use it hereafter. In fact, Lemma~\ref{lem: a b g alt} contains additional characterisations for an alt-colouring of $\L_{\l,2,2}$.

\begin{lemma}

	Suppose $\varphi$ is a 2-colouring of the vertices of $\L_{\l,2,2}$. The following statements are equivalent.
	\begin{enumerate}
	\item $\varphi$ is an alt-colouring.
	\item For all $x\in \L_{\l,2,2}$, $\varphi(x)\neq\varphi(x^\triangleright)$.
	\item $\varphi$ is both $\a$-free and $\b$-free.
	\item $\cX_0=\Z_{\l}\times\Z_2\times\{0\}$ and $\cX_1=\Z_{\l}\times\Z_2\times\{1\}$ are the two colour classes of $\varphi$.
	\end{enumerate}
    \label{lem: a b g alt}
\end{lemma}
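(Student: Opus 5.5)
We will prove the cycle of implications $1\Rightarrow 2\Rightarrow 3\Rightarrow 4\Rightarrow 1$. Regard $\L_{\l,2,2}$ as a single part whose vertices are listed in the primary order $\prec$. The consecutive pairs $\{x,x^\triangleright\}$ with $x\neq(\l-1,1,1)$ are then exactly the pairs that must be non-monochromatic in an alt-colouring. The wrap-around pair is $\{(\l-1,1,1),(0,0,0)\}$.

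\emph{$1\Rightarrow 2$.} An alt-colouring has $\varphi(x)\neq\varphi(x^\triangleright)$ for every non-wrap-around $x$. It remains to treat the wrap-around pair, and we argue as in the remark after Lemma~\ref{lem: a b TPT}. Every $\a$-set $\{x,x^\triangleright\}$ has $x\in\Z_\l\times\Z_2\times\{0\}$, so $x^\triangleright$ shares the first two coordinates and the pair is never the wrap-around pair. Hence $a_0=a_1=0$. Every $\b$-set other than $\{(\l-1,1,1),(0,0,0)\}$ is a non-wrap consecutive pair, so at most this one $\b$-set is a twin-pair. If it were a $\TPT{c}$, then $b_c=1$ and $b_{1-c}=0$, which gives $a_0+b_1\neq a_1+b_0$ and contradicts Lemma~\ref{lem: a b TPT}. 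So $\varphi((\l-1,1,1))\neq\varphi((0,0,0))$ as well.

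\emph{$2\Rightarrow 3$.} This is immediate, because every $\a$-set and every $\b$-set has the form $\{x,x^\triangleright\}$.

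\emph{$3\Rightarrow 4$.} The $\a$-sets and $\b$-sets together are exactly all pairs $\{x,x^\triangleright\}$: for each $x$, the pair is an $\a$-set if the third coordinate of $x$ is $0$ and a $\b$-set otherwise. So $\varphi$ alternates all the way around the cyclic order of $\L_{\l,2,2}$. Along this order the third coordinate also alternates $0,1,0,1,\dots$, starting from $(0,0,0)$. Since $4\l$ is even, this is consistent around the cycle. Therefore $\varphi(x)=\varphi((0,0,0))+x_3 \pmod 2$ for every $x$, which says the colour classes are exactly $\cX_0$ and $\cX_1$.

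\emph{$4\Rightarrow 1$.} The vertices $x$ and $x^\triangleright$ always differ in the third coordinate. So if the colour classes are $\cX_0$ and $\cX_1$, consecutive vertices receive different colours, and $\varphi$ is an alt-colouring.

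None of these steps is a real obstacle. The only subtle point is the wrap-around pair in $1\Rightarrow 2$, and Lemma~\ref{lem: a b TPT} handles it.
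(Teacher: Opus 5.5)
Your proof is correct. For statements 1--3 it matches the paper, which simply refers back to the remark after Lemma~\ref{lem: a b TPT}: the wrap-around pair $\{(\ell-1,1,1),(0,0,0)\}$ cannot be the only $\beta$-twin-pair.

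The difference is in how statement 4 is attached. The paper proves $2\Leftrightarrow 4$ structurally. The map $x\mapsto(x^\triangleright)^\triangleright$ has exactly the two orbits $\cX_0$ and $\cX_1$, so the only sets with $A=(A^\triangleright)^\triangleright$ are $\emptyset,\cX_0,\cX_1,L_{\ell,2,2}$. The paper then shows that any partition $\{A,B\}$ with $A^\triangleright=B$ satisfies $A=(A^\triangleright)^\triangleright$. Hence $\{\cX_0,\cX_1\}$ is the unique such partition, and statement 2 says exactly that the colour classes form one.

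You instead close the cycle $3\Rightarrow 4\Rightarrow 1$ using the direct observation that $\triangleright$ always flips the third coordinate, plus a one-line induction along the order. Both arguments rest on this same fact. Yours is shorter and more explicit, while the paper phrases it as a uniqueness statement about partitions.

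Note also that your induction in $3\Rightarrow 4$ uses only the non-wrap-around consecutive pairs, so it proves $1\Rightarrow 4$ directly. Statement 4 then gives $\varphi((\ell-1,1,1))\neq\varphi((0,0,0))$ for free, since these two vertices differ in the third coordinate. So your appeal to Lemma~\ref{lem: a b TPT} in $1\Rightarrow 2$ (and the paper's) is not actually needed: the evenness of $\lvert L_{\ell,2,2}\rvert=4\ell$ already handles the wrap-around pair.
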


\begin{proof}

We have already covered the equivalency of the first three statements. We prove the second and fourth statements are equivalent. For $i\in\{0,1\}$, if $x\in\cX_i$, then $x^\triangleright\not\in\cX_i$ and $(x^\triangleright)^\triangleright\in\cX_i$. If for a set $A\subseteq \L_{\l,2,2}$, $A=(A^\triangleright)^\triangleright$ and $x\in A$, then $(x^\triangleright)^\triangleright\in A$. Iteratively, as a result, the only sets with the property that $A=(A^\triangleright)^\triangleright$ are $\emptyset$, $\cX_0$, $\cX_1$, and $\L_{\l,2,2}$.
Suppose $\{A,B\}$ is a partition of $\L_{\l,2,2}$ such that $A^\triangleright=B$. Let $C$ be a set such that $C^\triangleright=A$. If $x\in A\cap C$, then, contradictively, $x^\triangleright\in A^\triangleright\cap C^\triangleright=B\cap A=\emptyset$. Therefore, $C\subseteq  A^c=B$ and since $\lvert C \rvert = \lvert C^\triangleright \rvert = \lvert A \rvert = \lvert A^\triangleright \rvert = \lvert B \rvert$, $C=B$ and so $A=(A^\triangleright)^\triangleright$.
Hence, $\{\cX_0,\cX_1\}$ is the unique partition of $\L_{\l,2,2}$ such that $\cX_0^\triangleright=\cX_1$. This property precisely characterises the colour classes described in the second statement. The uniqueness yields the equivalency.
\end{proof}

This lemma will give us a powerful tool, because if $\varphi$ is an alt-colouring, then all $\g$-sets will be twin-pairs and for all $i\in\Z_{\l}$, $\G_{i,0}$ and $\G_{i,1}$ are $\g$-twin-pairs of different types.

Since we want to use the join of some copies of the coclique $\L_{\l,2,2}$ to construct a part-wise totally ordered complete multipartite graph, we may add an additional variable to $\L_{\l,2,2}$ so we will be able to distinguish between different parts of the constructed complete multipartite graph. We use the new notation $\L^i_{\l,2,2}$ for $\{i\}\times\Z_{\l}\times\Z_2\times\Z_2\cong\{i\}\times\L_{\l,2,2}$.

A complete multipartite graph can be decomposed into complete bipartite subgraphs, each of which is induced by a pair of maximal cocliques. So, to decompose a multipartite graph into 4-cycles, we may start with these bipartite subgraphs.  A complete bipartite graph is 4-cycle decomposable, i.e., it can be decomposed into 4-cycles, if and only if each of its parts has an even number of vertices \cite{MR609596}. If even-sized subsets $X_1$ and $X_2$ partition a part of a 4-cycle decomposable complete bipartite graph $G$, and if $\cD_1$ and $\cD_2$ are 4-cycle decompositions of induced subgraphs on $V(G)\setminus X_2$ and $V(G)\setminus X_1$, respectively, then $\cD_1\cup\cD_2$ decomposes $G$ into 4-cycles. But each of these induced subgraphs is again a complete bipartite graph. So we may continue breaking the parts of these smaller bipartite graphs in the same manner until we end up with subgraphs isomorphic to $K_{2,2}$---which is a 4-cycle. In particular, let $G=K_{m,n}$ be a 4-cycle decomposable complete bipartite graph with parts $A$ and $B$ and suppose $\lvert A \rvert = m$ and $\lvert B \rvert = n$. Let $\cA$ be a partition of the vertices of $A$ into 2-subsets and $\cB$ be a partition of the vertices of $B$ into 2-subsets. The family of 4-cycles $$\mathrm{BD}(\mathcal{A},\mathcal{B})=\left\{(a,b,a',b')\mid \{a,a'\}\in\mathcal{A},\{b,b'\}\in\mathcal{B}\right\}$$ is a 4-cycle decomposition of $G$.

For a positive integer $\l$, we define $\BG_{\l,2,2}$ as the complete bipartite graph with parts $L^0_{\l,2,2}$ and $L^1_{\l,2,2}$. Since $\lvert \L^0_{\l,2,2}\rvert=\lvert \L^1_{\l,2,2}\rvert=\lvert \L_{\l,2,2}\rvert=4\l$, $\BG_{\l,2,2}$ is 4-cycle decomposable and indeed we will introduce three different decompositions for this graph. Suppose $\cA_0$ and $\cA_1$ are the $\a$-partitions, $\cB_0$ and $\cB_1$ are the $\b$-partitions, and $\cC_0$ and $\cC_1$ are the $\g$-partitions for $L^0_{\l,2,2}$ and $L^1_{\l,2,2}$, respectively. We call $\mathrm{BD}(\mathcal{A}_0,\mathcal{A}_1)$ the \textit{$\a$-decomposition} of $\BG_{\l,2,2}$ and $\mathrm{BD}(\mathcal{B}_0,\mathcal{B}_1)$ the \textit{$\b$-decomposition} of $\BG_{\l,2,2}$. If $\l >1$, then we call $\mathrm{BD}(\{\G_{0,0},\G_{0,1}\},\mathcal{A}_1)\cup\mathrm{BD}(\cC_0\setminus\{\G_{0,0},\G_{0,1}\},\mathcal{B}_1)$ the \textit{$\g$-decomposition} of $\BG_{\l,2,2}$. Every cycle in the $\a$-decomposition, the $\b$-decomposition, and the $\g$-decomposition of $\BG_{\l,2,2}$ contains two consecutive vertices of $L^1_{\l,2,2}$. Therefore, by Lemma~\ref{lem: a b g alt}, no alt-colouring on $V(\BG_{\l,2,2})$ can admit a monochromatic cycle in any of these decompositions. 

\begin{remark}
The $\a$-decomposition, the $\b$-decomposition, and the $\g$-decomposition of \linebreak
$\BG_{\l,2,2}$ are all not only alt-colourable, but each can be coloured by all possible alt-colourings.
\label{rem: a b g alt}
\end{remark}

Let $\cD$ be a 4-cycle decomposition of $\BG_{\l,2,2}$, and let $\varphi:V(\mathrm{BG}_{\l,2,2})\rightarrow\Z_2$ be a 2-colouring of $\cD$. If $(a_1,a_2,a_3,a_4)\in\cD$ and for some $c\in\Z_2$, one of $\{a_1,a_3\}$ and $\{a_2,a_4\}$ is a $\TPT{c}$, then, to avoid a monochromatic cycle, the other cannot be a $\TPT{c}$. Consequently, the following lemma is a direct result of Lemma \ref{lem: a b g alt}.

\begin{lemma}

Suppose $\l>1$ is an integer, $\cD$ is the $\g$-decomposition of $\BG_{\l,2,2}$, and $\varphi$ is a 2-colouring of $\cD$. If the restriction of $\varphi$ to $L^0_{\l,2,2}$ is an alt-colouring, then $\varphi$ is also an alt-colouring.

\label{lem: g dec}
\end{lemma}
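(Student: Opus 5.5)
By Lemma~\ref{lem: a b g alt}, the restriction of $\varphi$ to $L^0_{\l,2,2}$ has colour classes $\{0\}\times\cX_0$ and $\{0\}\times\cX_1$. Since a $\g$-set never changes the third coordinate, every $\g$-set of $L^0_{\l,2,2}$ is a twin-pair. More precisely, $\G_{i,0}$ and $\G_{i,1}$ are twin-pairs of opposite types for every $i$. Let $c$ denote the colour of $\{0\}\times\cX_0$, so each $\G_{i,0}$ is a $\TPT{c}$ and each $\G_{i,1}$ is a $\TPT{1-c}$. The plan is to apply the preceding observation to each 4-cycle $(a,b,a',b')$ of the $\g$-decomposition. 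In each such cycle, $\{a,a'\}$ is a $\g$-set of $L^0_{\l,2,2}$ and hence a twin-pair of known type, so the opposite pair $\{b,b'\}$ in $L^1_{\l,2,2}$ cannot be a twin-pair of that same type. The aim is to show from this that $\varphi$ restricted to $L^1_{\l,2,2}$ is $\a$-free and $\b$-free. By Lemma~\ref{lem: a b g alt} (statement 3) it is then an alt-colouring, and hence so is $\varphi$, since the alt property is checked part by part.

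\textbf{Step 1 ($\a$-free).} The first piece of the decomposition is $\mathrm{BD}(\{\G_{0,0},\G_{0,1}\},\cA_1)$. Every $\a$-set $A$ of $L^1_{\l,2,2}$ occurs in a cycle with $\G_{0,0}$, which is a $\TPT{c}$, so $A$ is not a $\TPT{c}$. It also occurs in a cycle with $\G_{0,1}$, which is a $\TPT{1-c}$, so $A$ is not a $\TPT{1-c}$. Hence no $\a$-set is a twin-pair, and $\varphi|_{L^1}$ is $\a$-free.

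\textbf{Step 2 ($\b$-free).} The second piece is $\mathrm{BD}(\cC_0\setminus\{\G_{0,0},\G_{0,1}\},\cB_1)$. Because $\l>1$, this piece contains the $\g$-sets $\G_{1,0}$ and $\G_{1,1}$, which are twin-pairs of both types. Every $\b$-set of $L^1_{\l,2,2}$ is paired with both of them, so by the same argument no $\b$-set is a twin-pair. This is exactly where the hypothesis $\l>1$ is needed.

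\textbf{Conclusion.} With $\varphi|_{L^1}$ both $\a$-free and $\b$-free, Lemma~\ref{lem: a b g alt} gives that it is an alt-colouring, and therefore $\varphi$ is an alt-colouring. There is no real obstacle in this argument. The one point needing care is to check that each sub-decomposition contains $\g$-twin-pairs of \emph{both} types, so that both twin types are excluded on the other side; this holds for the two pieces used in Steps 1 and 2.
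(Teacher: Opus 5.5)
Your proof is correct and follows essentially the paper's route. The paper states the lemma as a direct consequence of Lemma~\ref{lem: a b g alt} together with the observation that the two opposite pairs of a 4-cycle cannot both be twin-pairs of the same type. Your Steps 1 and 2 spell out exactly how the pieces $\mathrm{BD}(\{\G_{0,0},\G_{0,1}\},\cA_1)$ and $\mathrm{BD}(\cC_0\setminus\{\G_{0,0},\G_{0,1}\},\cB_1)$ force $\a$-freeness and $\b$-freeness on $L^1_{\l,2,2}$, with $\l>1$ used precisely where you indicate.
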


We can say $L^0_{\l,2,2}$, the first part of $\BG_{\l,2,2}$, ``exclusivises'' $L^1_{\l,2,2}$, the second part of $\BG_{\l,2,2}$, through the $\g$-decomposition. In fact, we will introduce a structure named the \textit{exclusiviser} that uses this property to expand an exclusively partially alt-colourable decomposition---and by which to restrict the colouring---into an exclusively alt-colourable decomposition.

Let $h$ and $\l$ be positive integers. We define $\MG_{h,\l,2,2}$ to be the complete $h$-partite graph with parts $\L^i_{\l,2,2}$ for all $i\in\Z_h$. For $h>1$, choose, arbitrarily, two integers $i\neq j$ from $\Z_{h}$; the induced subgraph of $\MG_{h,\l,2,2}$ on $\L^i_{\l,2,2}\cup \L^j_{\l,2,2}$, denoted by $\BG^{(i,j)}_{\l,2,2}$, is a bipartite subgraph naturally isomorphic to $\BG_{\l,2,2}$ where we map $\L^i_{\l,2,2}$ to the first part and $\L^j_{\l,2,2}$ to the second part of $\BG_{\l,2,2}$. As mentioned previously, we can decompose a multipartite graph into bipartite subgraphs. If $h\geq 1$ and $M$ is a tournament on the vertex set $\Z_h$, then the family $\{\BG^{(i,j)}_{\l,2,2}\mid (i\rightarrow j)\in M\}$ decomposes $\MG_{h,\l,2,2}$ into complete bipartite graphs. Hence, we can embed a 4-cycle decomposition for each of these complete bipartite graphs to construct a 4-cycle decomposition of $\MG_{h,\l,2,2}$. This decomposition is a specific case of a general structure in which we patch 4-cycle decompositions of smaller subgraphs together.

Suppose for positive integers $h$ and $r$, $\{h_1,h_2,\ldots,h_r\}$ is a set of positive integers with partial sums $\widetilde{h}_i=\Sigma_{j=1}^ih_j$ for all $i\in\{0,\ldots,r\}$, such that $\widetilde{h}_0=0$ and $\widetilde{h}_r=h$. Suppose for all $i\in\{1,\ldots,r\}$, $\cD_i$ is a 4-cycle decomposition of $\MG_{h_{i},\l,2,2}$. Note that if $h_i=1$, then $\cD_i=\emptyset$. Suppose $\df$ is a 4-cycle decomposition of $\BG_{\l,2,2}$ and let $\db$ be the 4-cycle decomposition of $\BG_{\l,2,2}$ where we replace every vertex $(0,i,j,c)$ with $(1,i,j,c)$ and vice versa in every cycle in $\df$ for all $(i,j,c)\in\Z_{\l}\times\Z_2\times\Z_2$. Suppose $M$ is a tournament with the vertex set $\{t_0,\ldots,t_{r-1}\}$. We will now construct a 4-cycle decomposition of $\MG_{h,\l,2,2}$, namely $\cD$, by patching $\cD_1,\ldots,\cD_{r}$ together through $M$ and using $\df$ as a bridge between them. To do so, we partition the parts of $\MG_{h,\l,2,2}$ into sets of vertices $U_i=\bigcup_{j=\widetilde{h}_i}^{\widetilde{h}_{i+1}-1}\L^j_{\l,2,2}$ for all $i\in\Z_r$. For all $i\in\{1,\ldots,r\}$, the subgraph of $\MG_{h,\l,2,2}$ induced on $U_{i-1}$ is an $h_{i}$-partite graph isomorphic to $\MG_{h_{i},\l,2,2}$; we can embed $\cD_i$ in $\cD$ and by doing so, we decompose this subgraph. Now we use $M$ as a guiding map to decompose the remaining $\BG^{(i,j)}_{\l,2,2}$ with $\df$. For $i'\neq j'$, let $\L^i_{\l,2,2}\subseteq U_{i'}$ and $\L^j_{\l,2,2}\subseteq U_{j'}$. If $(t_{i'}\rightarrow t_{j'})\in M$, then we use the natural isomorphism of $\BG^{(i,j)}_{\l,2,2}$ and embed $\df$ in $\cD$ for $\BG^{(i,j)}_{\l,2,2}$; otherwise, embed $\db$ for $\BG^{(i,j)}_{\l,2,2}$. This method gives us a 4-cycle decomposition of $\MG_{h,\l,2,2}$ that we denote by $\D_{h_1,\ldots,h_r}(\cD_1,\ldots,\cD_r;M;\df)$. Clearly, if $h=1$, then this decomposition will be an empty set. If $\df=\db$, then we do not need the tournament $M$ anymore and we can write $\D_{h_1,\ldots,h_r}(\cD_1,\ldots,\cD_r;\df)$ instead. Moreover, if $h_1=\cdots=h_r=1$, then $\cD_1=\cdots=\cD_r=\emptyset$ and we may use the notation $\D_{[h]}(M;\df)$ when $\df\neq \db$ and $\D_{[h]}(\df)$ when $\df= \db$. Note that $\D_{h_1,\ldots,h_r}(\cD_1,\ldots,\cD_r;M;\df)$ is alt-colourable if and only if $\cD_1,\ldots,\cD_r$ and $\df$ are each alt-colourable. Since the parts of $\MG_{h,\l,2,2}$ are of even size, an alt-colouring of a 4-cycle decomposition of $\MG_{h,\l,2,2}$ is a super-alt-colouring if and only if each part starts with the same colour, and thus the order in which we write the parts does not matter. Therefore, $\D_{h_1,\ldots,h_r}(\cD_1,\ldots,\cD_r;M;\df)$ admits a super-alt-colouring if and only if $\cD_1,\ldots,\cD_r$ and $\df$ each admits a super-alt-colouring.

Suppose the 4-cycle decompositions $\cD_1,\ldots,\cD_r$ are all alt-colourable, $M$ contains a directed Hamiltonian cycle $\mathcal{H}$, and $\df$ is the $\g$-decomposition of $\BG_{\l,2,2}$. If some of $\cD_1,\ldots,\cD_r$ are exclusively partially alt-colourable, then, by Lemma \ref{lem: g dec}, the directed Hamiltonian cycle $\mathcal{H}$ ensures that $\D_{h_1,\ldots,h_r}(\cD_1,\ldots,\cD_r;M;\df)$ cannot be 2-coloured by anything but alt-colourings. When $r=3$, $M$ is a directed Hamiltonian cycle, and $\df$ is the $\g$-decomposition of $\BG_{\l,2,2}$, then we define the \textit{exclusiviser} of $\cD$ as $\E_h(\cD)=\D_{h-2,1,1}(\cD,\emptyset,\emptyset;M;\df)$.

\begin{lemma}
If $h>2$ and $\cD$ is an exclusively partially alt-colourable 4-cycle decomposition of $\MG_{h-2,\l,2,2}$ which admits a super-alt-colouring, then $\E_h(\cD)$ is an exclusively alt-colourable 4-cycle decomposition of $\MG_{h,\l,2,2}$.

\label{lem: ex dec}
\end{lemma}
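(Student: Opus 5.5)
The proof has two parts. First we show that $\E_h(\cD)$ admits a super-alt-colouring. Then we show that every 2-colouring of $\E_h(\cD)$ is an alt-colouring.

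\textbf{Existence of a super-alt-colouring.} We use the structural remark made just before the definition of the exclusiviser. The decomposition $\D_{h-2,1,1}(\cD,\emptyset,\emptyset;M;\df)$ admits a super-alt-colouring if and only if each of its ingredients does. The ingredient $\cD$ does by hypothesis. The empty decompositions of $\MG_{1,\l,2,2}$ do trivially. The $\g$-decomposition $\df$ does by Remark~\ref{rem: a b g alt}, since every alt-colouring colours it, in particular the one in which both parts start with the same colour.

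Concretely, we colour every part $\L^i_{\l,2,2}$ by $\varphi(i,a,b,c)=c$. By Lemma~\ref{lem: a b g alt} this is an alt-colouring on each part, and every part starts with colour $0$. We then check that no cycle is monochromatic:
\begin{itemize}
\item cycles of $\cD$ are fine because $\cD$ admits a super-alt-colouring, which by Lemma~\ref{lem: a b g alt} is this colouring up to swapping colours within parts, and the same-start condition fixes it;
\item cycles in copies of $\df$ or $\db$ are fine by Remark~\ref{rem: a b g alt}; both orientations are copies of the same decomposition, so the remark applies to each.
\end{itemize}

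\textbf{Every 2-colouring is an alt-colouring.} Let $\psi$ be any 2-colouring of $\E_h(\cD)$. Label the three blocks $U_0$ (the $h-2$ parts carrying $\cD$), $U_1=\L^{h-2}_{\l,2,2}$ and $U_2=\L^{h-1}_{\l,2,2}$. The tournament $M$ is the directed triangle $t_0\to t_1\to t_2\to t_0$.

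\begin{enumerate}
\item The restriction of $\psi$ to $U_0$ is a 2-colouring of $\cD$. Since $\cD$ is exclusively partially alt-colourable, some part $\L^{j}_{\l,2,2}\subseteq U_0$ is alt-coloured. (Every part has $4\l\ge 4$ vertices, so the "at least two vertices" clause is automatic.)
\item The pair $\BG^{(j,h-2)}_{\l,2,2}$ carries $\df$ oriented with $\L^j$ as the first part, because $t_0\to t_1$. Lemma~\ref{lem: g dec} then shows that $U_1$ is alt-coloured.
\item Because $t_1\to t_2$, the same lemma applied to $\BG^{(h-2,h-1)}_{\l,2,2}$ shows that $U_2$ is alt-coloured.
\item Because $t_2\to t_0$, each $\BG^{(h-1,i)}_{\l,2,2}$ with $\L^i\subseteq U_0$ is decomposed by $\df$ with $U_2$ as the first part. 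Hence every part of $U_0$ is alt-coloured.
\end{enumerate}

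Thus $\psi$ is alt on every part, that is, it is an alt-colouring, and $\E_h(\cD)$ is exclusively alt-colourable.

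The main point needing care is the orientation bookkeeping in $\D_{h-2,1,1}(\cdot;M;\df)$. For $\L^i\subseteq U_{i'}$ and $\L^j\subseteq U_{j'}$, the copy of $\df$ is embedded with $\L^i$ as the first (exclusivising) part exactly when $t_{i'}\to t_{j'}$. We must confirm this matches the natural isomorphism of $\BG^{(i,j)}_{\l,2,2}$ and that $\db$ simply swaps roles. The directed Hamiltonian cycle is what lets alternation propagate around all three blocks and back into every part of $U_0$, including those parts that the partial alt-colourability of $\cD$ left uncontrolled.
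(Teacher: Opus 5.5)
Your proposal is correct and follows essentially the same route as the paper. Super-alt-colourability comes from the hypothesis on $\cD$ together with Remark~\ref{rem: a b g alt} for the $\gamma$-decomposition, and exclusivity comes from Lemma~\ref{lem: g dec} propagating alternation from the alt-coloured part of $U_0$ around the directed triangle $M$ and back into every part of $U_0$; you simply make explicit the orientation bookkeeping (including the without-loss-of-generality choice of direction for the 3-cycle) that the paper's two-line proof leaves implicit.
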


\begin{proof}
Note that by Remark~\ref{rem: a b g alt}, a $\g$-decomposition can be coloured with a super-alt-colouring. Since $\cD$ also admits a super-alt-colouring, then $\E_h(\cD)=\D_{h-2,1,1}(\cD,\emptyset,\emptyset;M;\df)$ can be coloured with a super-alt-colouring by construction.

The exclusivity of the alt-colouring is a direct consequence of Lemma \ref{lem: g dec}.
\end{proof}

It remains to find the required exclusively partially alt-colourable 4-cycle decomposition of $\MG_{h-2,\l,2,2}$.

Let $\l$ be a positive integer and let $\cD_{\a}$ and $\cD_{\b}$ be the $\a$-decomposition and the \linebreak
$\b$-decomposition of $\BG_{\l,2,2}$, respectively. Suppose $\varphi$ is a 2-colouring of $\cD_{\a}$ and its restriction to $\L^0_{\l,2,2}$ has at least one $\a$-$\TPT{0}$ and no $\b$-$\TPT{0}$. Then by Lemma \ref{lem: a b TPT}, the number of $\a$-$\TPT{1}$ for the restriction of $\varphi$ to $\L^0_{\l,2,2}$ must be a positive number.
Therefore, the restriction of $\varphi$ to $\L^1_{\l,2,2}$ must be $\a$-free to avoid monochromatic cycles. We can find similar results if we swap the colour 0 with colour 1, $\a$ with $\b$, or $\L^0_{\l,2,2}$ with $\L^1_{\l,2,2}$ (e.g., if the restriction of a 2-colouring of $\cD_{\b}$ to $\L^1_{\l,2,2}$ has at least one $\b$-$\TPT{1}$ and no $\a$-$\TPT{1}$, then its restriction to $\L^0_{\l,2,2}$ must be $\b$-free). These properties lead to the three following consequences:

(I) Suppose $\varphi$ is a 2-colouring of $\cD_{\a}$ ($\cD_{\b}$, resp.)\ and its restriction to $\L^0_{\l,2,2}$ is $\b$-free ($\a$-free, resp.)\ but not $\a$-free ($\b$-free, resp.). Then the restriction of $\varphi$ to $\L^1_{\l,2,2}$ is $\a$-free ($\b$-free, resp.).

(II) Suppose $\varphi$ is a 2-colouring of $\cD_{\a}$ ($\cD_{\b}$, resp.). If neither of the restrictions of $\varphi$ to $\L^0_{\l,2,2}$ and $\L^1_{\l,2,2}$ is $\a$-free ($\b$-free, resp.),\ then also neither of them is $\b$-free ($\a$-free, resp.).

(III) Suppose $\varphi$ is a 2-colouring of $\cD_{\a}$ ($\cD_{\b}$, resp.). If the restriction of $\varphi$ to $\L^0_{\l,2,2}$ has at least one $\a$-$\TPT{c}$ ($\b$-$\TPT{c}$, resp.),\ for some $c\in\Z_2$, and its restriction to $\L^1_{\l,2,2}$ is not $\a$-free ($\b$-free, resp.),\ then the restriction of $\varphi$ to $\L^0_{\l,2,2}$ contains at least one $\b$-$\TPT{c}$ ($\a$-$\TPT{c}$, resp.).

\begin{lemma}

$\cD_{\a\times\b}=\D_{2,2}\left(\D_{[2]}(\cD_{\a}),\D_{[2]}(\cD_{\a}\right);\cD_{\b})$ is exclusively partially alt-colourable.

\label{lem: a b dec}
\end{lemma}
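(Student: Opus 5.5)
The plan is to argue by contradiction. Label the four parts of $\MG_{4,\l,2,2}$ as $0,1,2,3$. Then $\cD_{\a\times\b}$ uses the $\a$-decomposition on $\BG^{(0,1)}_{\l,2,2}$ and on $\BG^{(2,3)}_{\l,2,2}$, and the $\b$-decomposition on the four cross pairs $\{0,1\}\times\{2,3\}$. Since $\cD_\a$ and $\cD_\b$ are invariant under swapping the two parts, the roles of the two sides may be exchanged freely in (I)--(III) and in the remarks preceding them. Suppose $\varphi$ is a 2-colouring of $\cD_{\a\times\b}$ whose restriction to no part is an alt-colouring. By Lemma~\ref{lem: a b g alt}, no part is both $\a$-free and $\b$-free, so every part has exactly one of three types:
\begin{itemize}
\item[] $A$: $\a$-free but not $\b$-free;
\item[] $B$: $\b$-free but not $\a$-free;
\item[] $N$: neither.
\end{itemize}

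\textbf{Step 1: list the allowed type pairs.} For an $\a$-pair, (I) says that if one side is $B$, the other side is $\a$-free, hence of type $A$. (II) says that if neither side is $\a$-free, then neither is $\b$-free; so $\{B,N\}$ is impossible, and $\{B,B\}$ is excluded as well. The allowed $\a$-pairs are therefore $\{A,A\},\{A,B\},\{A,N\},\{N,N\}$. Dually, the allowed $\b$-pairs are $\{B,B\},\{A,B\},\{B,N\},\{N,N\}$.

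\textbf{Step 2: force all four parts to have type $N$.} If some part, say $0$, is $A$, then parts $2$ and $3$ must both be $B$. But then the $\a$-pair $\{2,3\}$ is $\{B,B\}$, which is forbidden. If some part, say $0$, is $N$, then parts $2$ and $3$ are in $\{B,N\}$ and part $1$ is in $\{A,N\}$. Part $1$ cannot be $A$ by the previous argument, so it is $N$. Finally, $\{2,3\}$ is an $\a$-pair drawn from $\{B,N\}$, so it must be $\{N,N\}$. If no part is $A$ or $N$, all parts are $B$, and again $\{2,3\}$ is a forbidden $\{B,B\}$. So all parts are $N$: each part has at least one $\a$-twin-pair and at least one $\b$-twin-pair.

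\textbf{Step 3: the main obstacle, eliminating the all-$N$ case.} This is where finer information is needed. The key observation, made just before (I), is the following. In $\cD_\a$ every pair of an $\a$-set of part $0$ with an $\a$-set of part $1$ forms a cycle. Hence if part $0$ has an $\a$-$\TPT{c}$, part $1$ has no $\a$-$\TPT{c}$. The same holds for $\b$-twin-pairs across every $\b$-pair.

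Apply this to the $\a$-pair $\{0,1\}$. Both parts have $\a$-twin-pairs, so there is a colour $c$ such that part $0$ has only $\a$-$\TPT{c}$'s and part $1$ has only $\a$-$\TPT{1-c}$'s. Lemma~\ref{lem: a b TPT} applied to part $0$, with $a_c>0$ and $a_{1-c}=0$, gives $b_{1-c}=a_c+b_c>0$. So part $0$ contains a $\b$-$\TPT{1-c}$. Symmetrically, part $1$ contains a $\b$-$\TPT{c}$.

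Now part $2$ forms a $\b$-pair with both part $0$ and part $1$. Therefore part $2$ can contain no $\b$-$\TPT{1-c}$ and no $\b$-$\TPT{c}$, i.e.\ part $2$ is $\b$-free. This contradicts part $2$ having type $N$. Hence every 2-colouring restricts to an alt-colouring on some part, so $\cD_{\a\times\b}$ is exclusively partially alt-colourable.
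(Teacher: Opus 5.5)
Your proof is correct and follows the paper's argument. Steps 1--2 are a tabulated form of the paper's use of (I) and (II) to force every part to be neither $\alpha$-free nor $\beta$-free, and Step 3 is the paper's final contradiction showing that part $2$ must be $\beta$-free.

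There is one slip in Step 3. Lemma~\ref{lem: a b TPT} with $a_{1-c}=0$ gives $b_c=a_c+b_{1-c}>0$, which is exactly (III), not $b_{1-c}=a_c+b_c$. So part $0$ has a $\beta$-$\mathrm{TPT}_c$ and part $1$ has a $\beta$-$\mathrm{TPT}_{1-c}$, not the reverse. This does not affect the conclusion, since you only use that parts $0$ and $1$ carry $\beta$-twin-pairs of opposite colours.
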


\begin{proof}
We have to prove that for every 2-colouring of $\cD_{\a\times\b}$, the colouring of at least one part of $\MG_{4,\l,2,2}$ is both $\a$-free and $\b$-free.

We suppose $\varphi$ is a 2-colouring of $\cD_{\a\times\b}$ and $\varphi_i$ is the restriction of $\varphi$ to $\L^i_{\l,2,2}$ for $i\in\Z_4$. Let $\cD^{i,j}$ be the 4-cycle decomposition induced from $\cD_{\a\times\b}$ on $\BG^{(i,j)}_{\l,2,2}$ for all integers $i\neq j$ in $\Z_4$. Note that when $\{i,j\}=\{0,1\}$ or $\{i,j\}=\{2,3\}$, $\cD^{i,j}\cong \cD_{\a}$, and for other combinations of $i$ and $j$, $\cD^{i,j}\cong \cD_{\b}$.

For the sake of contradiction, suppose for all $i\in\Z_4$ the restriction $\varphi_i$ is not both $\a$-free and $\b$-free (i.e., each $\varphi_i$ has some $\a$-twin-pair or $\b$-twin-pair). Assume $\varphi_0$ is $\a$-free. Then, since $\cD^{0,2}$ and $\cD^{0,3}$ are $\b$-decompositions, by (I), $\varphi_2$ and $\varphi_3$ are $\b$-free, but (I) will guarantee that at least one of $\varphi_2$ and $\varphi_3$ must also be $\a$-free which is a contradiction. In fact, using the same logic, none of the restrictions can be $\a$-free; otherwise the hypothesis will be contradicted. Furthermore, by (II), for all $i\in\Z_4$, $\varphi_i$ is neither $\a$-free nor $\b$-free (i.e., each $\varphi_i$ has some $\a$-twin-pair and some $\b$-twin-pair).

Without loss of generality, we may assume $\varphi_0$ contains an $\a$-$\TPT{0}$. Therefore, by (III), $\varphi_0$ contains a $\b$-$\TPT{0}$. To avoid monochromatic cycles, $\varphi_1$ cannot contain the same type of $\a$-twin-pairs as $\varphi_0$, so it must contain an $\a$-$\TPT{1}$, and also by (III), it must contain a $\b$-$\TPT{1}$. Hence, to avoid monochromatic cycles in $\cD^{0,2}$ and $\cD^{1,2}$, $\varphi_2$ must be $\b$-free, which is a contradiction.
\end{proof}

More generally, whenever there exist distinct integers $i$ and $j$ in $\{1,\ldots,r\}$ such \linebreak
that $h_i>1$ and $h_j>1$, then $\D_{h_1,\ldots,h_r}\left(\D_{[h_1]}(\cD_{\a}),\ldots,\D_{[h_r]}(\cD_{\a});\cD_{\b}\right)$ is exclusively \linebreak
partially alt-colourable since the 4-cycle decomposition induced from \linebreak
$\D_{h_1,\ldots,h_r}\left(\D_{[h_1]}(\cD_{\a}),\ldots,\D_{[h_r]}(\cD_{\a});\cD_{\b}\right)$ on $\MG_{h,\l,2,2}\left[\L^{\widetilde{h}_{i-1}}_{\l,2,2}\cup\L^{\widetilde{h}_{i-1}+1}_{\l,2,2}\cup\L^{\widetilde{h}_{j-1}}_{\l,2,2}\cup\L^{\widetilde{h}_{j-1}+1}_{\l,2,2}\right]$ (the subgraph of $\MG_{h,\l,2,2}$ induced on the vertices in $\L^{\widetilde{h}_{i-1}}_{\l,2,2}\cup\L^{\widetilde{h}_{i-1}+1}_{\l,2,2}\cup\L^{\widetilde{h}_{j-1}}_{\l,2,2}\cup\L^{\widetilde{h}_{j-1}+1}_{\l,2,2}$) is isomorphic to $\cD_{\a\times\b}$. Now we have all we need to construct exclusively alt-colourable 4-cycle decompositions of complete multipartite graphs.

\begin{theorem}
If $h\geq 6$ and $\l\geq 2$ are integers, then there exists an exclusively alt-colourable 4-cycle decomposition of $K_{4\l,4\l,\ldots,4\l}$, the complete $h$-partite graph with parts of size $4\l$.
\end{theorem}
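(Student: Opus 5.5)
We identify $K_{4\l,\ldots,4\l}$ with $\MG_{h,\l,2,2}$, using the given total order on each $\L^i_{\l,2,2}$, and then build the decomposition as $\E_h(\cD')$ by Lemma~\ref{lem: ex dec}. This requires a 4-cycle decomposition $\cD'$ of $\MG_{h-2,\l,2,2}$ that is exclusively partially alt-colourable and admits a super-alt-colouring. Since $h\geq 6$, we have $h-2\geq 4$.

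For $\cD'$ we use the generalisation of Lemma~\ref{lem: a b dec} stated just before the theorem. Write $h-2=h_1+\cdots+h_r$ with at least two of the $h_i$ exceeding $1$. Concretely, take $r=2$, $h_1=2$ and $h_2=h-4\geq 2$, and set $\cD'=\D_{h_1,h_2}\left(\D_{[h_1]}(\cD_{\a}),\D_{[h_2]}(\cD_{\a});\cD_{\b}\right)$.

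\begin{enumerate}
\item We check that $\cD'$ is a genuine 4-cycle decomposition of $\MG_{h-2,\l,2,2}$. The notation $\D_{[h_i]}(\cD_{\a})$ without a tournament is legitimate because $\cD_{\a}$ is symmetric under swapping the two parts: both partitions are $\a$-partitions, so $\df=\db$. The same holds for $\cD_{\b}$. The patching construction then decomposes every bipartite section exactly once.
\item We verify exclusive partial alt-colourability. Take parts $\widetilde h_0,\widetilde h_0+1$ from the first block and $\widetilde h_1,\widetilde h_1+1$ from the second. The induced decomposition on these four parts is isomorphic to $\cD_{\a\times\b}$. By Lemma~\ref{lem: a b dec}, every 2-colouring restricted to those four parts has some part that is both $\a$-free and $\b$-free. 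By Lemma~\ref{lem: a b g alt}, such a part is alternating, which is exactly what partial alt-colourability requires.
\item We verify that $\cD'$ admits a super-alt-colouring. By Remark~\ref{rem: a b g alt}, the $\a$- and $\b$-decompositions accept every alt-colouring. We colour each part by $\cX_0\mapsto 0$ and $\cX_1\mapsto 1$, so all parts start with the same colour. By the discussion of patched decompositions, this is a super-alt-colouring of $\cD'$, because each constituent decomposition admits it and the part sizes are even.
\end{enumerate}

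Lemma~\ref{lem: ex dec} then gives that $\E_h(\cD')$ is exclusively alt-colourable. Where that lemma needs $\l>1$, this is supplied by $\l\geq 2$, which is required for the $\g$-decomposition used inside the exclusiviser.

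The main obstacle is step 2, namely justifying that the induced sub-decomposition on the four chosen parts really is $\cD_{\a\times\b}$. Within each block the two parts are joined by $\cD_{\a}$, and across blocks by $\cD_{\b}$, exactly as in the definition $\D_{2,2}(\D_{[2]}(\cD_{\a}),\D_{[2]}(\cD_{\a});\cD_{\b})$.

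A second point in step 2 is that a 2-colouring of $\cD'$ restricts to a 2-colouring of this sub-decomposition. This holds because the sub-decomposition's cycles are cycles of $\cD'$, so none of them is monochromatic. Finally, partial alt-colourability demands only one alternating part in the whole graph, so the statement passes directly from the subgraph to $\cD'$.
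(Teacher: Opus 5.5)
Your proposal is correct and is essentially the paper's proof: the paper also takes $\E_h(\cD')$ with $\cD'=\D_{2,h-4}\left(\D_{[2]}(\cD_{\a}),\D_{[h-4]}(\cD_{\a});\cD_{\b}\right)$, obtains exclusive partial alt-colourability from the embedded copy of $\cD_{\a\times\b}$ and the super-alt-colouring from Remark~\ref{rem: a b g alt}, and concludes via Lemma~\ref{lem: ex dec}. Your extra checks (that $\cD_{\a}$ and $\cD_{\b}$ are symmetric under swapping parts, and that a colouring of $\cD'$ restricts to the sub-decomposition) only make explicit what the paper leaves implicit.
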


\begin{proof}
For arbitrary $h\geq 6$ and $\l\geq 2$, label the vertices in the same manner as $\MG_{h,\l,2,2}$ and let $\cD=\E_h(\cD')$ be the 4-cycle decomposition of this graph, where
$$\cD'=\D_{2,h-4}\left(\D_{[2]}(\cD_{\a}),\D_{[h-4]}(\cD_{\a});\cD_{\b}\right)$$
and $\cD_{\a}$ and $\cD_{\b}$ are the $\a$-decomposition and the $\b$-decomposition of $\BG_{\l,2,2}$, respectively.
The decomposition $\cD'$ is exclusively partially alt-colourable, since it contains $\cD_{\a\times\b}$ from Lemma~\ref{lem: a b dec}. Also, as a consequence of Remark~\ref{rem: a b g alt}, $\cD'$ can be coloured by all possible alt-colourings, and more specifically, a super-alt-colouring. Thus, $\cD$, the exclusiviser of $\cD'$, is exclusively alt-colourable by Lemma~\ref{lem: ex dec}.
\end{proof}

One may notice that the proof for this construction is independent of the regularity of each complete multipartite graph. For example, we can find an exclusively partially alt-colourable 4-cycle decomposition of $K_{4,4,4,8}$ in the same way by using the complete multipartite graph $\L_{1,2,2}\vee\L_{1,2,2}\vee\L_{1,2,2}\vee\L_{2,2,2}$, where $\vee$ is the join operation.
We can follow the construction of an exclusively alt-colourable 4-cycle decomposition step by step for any complete multipartite graph such that the number of vertices in each part is at least 8 and is divisible by 4. This generality leads to the first of our main results:

\EAfourCD*

\section{Uniquely 2-colourable 4-cycle decompositions}
\label{sec3}

In the previous section, we succeeded in proving Theorem~\ref{thm: EA4CD}, which allows us to construct an exclusively alt-colourable 4-cycle decomposition of a complete $h$-partite graph with parts $L^1,\ldots,L^h$, for all integers $h\geq 6$, where $L^i$ is totally ordered and $\lvert L^i\rvert=8$ for all $i\in\{1,\ldots,h\}$; we now denote this decomposition by $\EA_h$. Since $\EA_h$ is exclusively alt-colourable, any 2-colouring of $\EA_h$ partitions each $L^i$ into a fixed unique partition $\{A_i,B_i\}$ with its colour classes. Furthermore, for all $L^i$, $\lvert A_i\rvert=\lvert B_i\rvert=4$.

In the current section, we will carefully embed $\EA_h$ into a 4-cycle decomposition of a graph which contains the aforementioned complete $h$-partite graph as a subgraph, such that the larger decomposition is uniquely 2-colourable.
To do so, we will introduce some new terminology for this section; the number of colours used in these definitions and their properties is not necessarily 2 but an arbitrary positive integer, denoted by $k$, unless stated otherwise.
Suppose $S_1,\ldots,S_k$ are collections of coloured objects. We say $S_1,\ldots,S_k$ are \textit{homochromatic} if $S_1\cup\cdots\cup S_k$ is monochromatic, and we say $S_1,\ldots,S_k$ are \textit{pairwise heterochromatic}, or simply \textit{heterochromatic}, if each nonempty $S_i$, $i\in\{1,\ldots,k\}$, is monochromatic and has a different colour from each nonempty $S_j$, $j\neq i$.

Let $G$ be a graph and $\cD$ be a cycle decomposition of $G$.
Suppose $k$ and $r$ are positive integers and for all $i\in\{1,\ldots,r\}$ and $j\in\{1,\ldots,k\}$, $S_{i,j}$ are disjoint subsets of $V(G)$, some of which may be empty sets. We say $\cD$ is \textit{proto-anchored} to $\big\{\{S_{1,j}\}_{j=1}^k,\ldots,\{S_{r,j}\}_{j=1}^k\big\}$ if there is exactly one $k$-colouring of $\cD$, namely $\psi:V(G)\rightarrow\{1,\ldots,k\}$, such that for all $i$, $S_{i,1},\ldots,S_{i,k}$ are heterochromatic and $\psi(S_{1,j})\subseteq\{j\}$ for all $j\in\{1,\ldots,k\}$. When $r=1$, if $\cD$ satisfies the conditions to be proto-anchored, we say $\cD$ is \textit{anchored} to $\{S_{1,1},\ldots,S_{1,k}\}$ instead.
Alternatively, we call a family of disjoint subsets of $V(G)$, namely $\Psi$, an \textit{anchor} of order $\lvert\Psi\rvert$ for $\cD$ if $\cD$ is anchored to $\Psi$. It is obvious that $\EA_h$ is anchored to $\{S_A,\emptyset\}$, where $S_A$ is a system of distinct representatives for $\{A_i\}_{i=1}^h$. Note that for any decomposition, an anchor of order $k$ exists if and only if the decomposition is $k$-colourable; since the colour classes of a $k$-colouring are always heterochromatic, we may take this family as the anchor. Furthermore, we offer an alternative characterisation for a decomposition that is uniquely $k$-colourable in the following remark.

\begin{remark} 

If $\cD$ is a cycle decomposition, and the vertex sets $\{a_1\},\ldots,\{a_{k-1}\}$ are heterochromatic with respect to each $k$-colouring of $\cD$, then $\cD$ is uniquely $k$-colourable if and only if $\big\{\{a_1\},\ldots,\{a_{k-1}\},\emptyset\big\}$ is an anchor for $\cD$. Consequently, $\cD$ is uniquely 2-colourable if and only if $\big\{\{v\},\emptyset\big\}$ is an anchor for $\cD$ for any vertex $v$.

\end{remark}

Let $r>1$ be a positive integer. Suppose $H_1,\ldots,H_r$ are isomorphic subgraphs of a graph $G$ where for all distinct $i_1$ and $i_2$ in $\{1,\ldots,r\}$, $V(H_{i_1})\cap V(H_{i_2})=\bigcap_{i=1}^r V(H_i)$ and there exists an isomorphism $\theta_{i_1,i_2}:V(H_{i_1})\to V(H_{i_2})$ such that $\theta_{i_1,i_2}(s)=s$ for all $s\in \bigcap_{i=1}^r V(H_i)$. For the purpose of constructing a well-defined 4-cycle decomposition, if there is more than one such isomorphism between $H_{i_1}$ and $H_{i_2}$, then we fix one of those isomorphisms from $H_{i_1}$ to $H_{i_2}$ as $\theta_{i_1,i_2}$ and we correspondingly choose $\theta_{i_1,i_2}^{-1}$ as $\theta_{i_2,i_1}$. Let $S$ be the subgraph of $G$ with $V(S)=\bigcap_{i=1}^r V(H_i)$ and $E(S)=\bigcap_{i=1}^r E(H_i)$. Note that for every $i\in\{1,\ldots,r\}$, $S$ is the induced subgraph of $H_i$ on $V(S)$. Suppose $V(S)$ is nonempty, $S$ has a 4-cycle decomposition $\cD$, and $\cD$ is embedded in a 4-cycle decomposition $\widehat{\cD}_1$ of $H_1$; in other words, $\cD$, as a sub-decomposition of $\widehat{\cD}_1$ ($\cD\subseteq\widehat{\cD}_1$), decomposes $S$. For all $i\in\{2,\ldots,r\}$, let $\widehat{\cD}_i$ be the 4-cycle decomposition of $H_i$ that is obtained from $\widehat{\cD}_1$ through the isomorphism $\theta_{1,i}$. Note that $\cD\subseteq\widehat{\cD}_i$ for all $i$. We denote
$$\fd_r(\cD,\widehat{\cD}_1)=\bigcup_{i=1}^r\widehat{\cD}_i,$$
which is a 4-cycle decomposition of $\bigcup_{i=1}^rH_i$. Suppose $k$ is a positive integer and $P_{1,1},\ldots,P_{1,k}$ is a family of subsets of $V(G)$ such that the nonempty subsets partition $V(H_1)\setminus V(S)$. For all $i\in\{2,\ldots,r\}$, use the isomorphism $\theta_{1,i}$ to form the subsets $P_{i,1},\ldots,P_{i,k}$ from $P_{1,1},\ldots,P_{1,k}$, respectively (i.e., $P_{i,\ast}=\theta_{1,i}(P_{1,\ast})$); note that the nonempty subsets in $P_{i,1},\ldots,P_{i,k}$ partition $V(H_i)\setminus V(S)$. Suppose $\widehat{\cD}_1$ is anchored to $\{P_{1,1},\ldots,P_{1,k}\}$. Accordingly, for all $i\in\{2,\ldots,r\}$, $\widehat{\cD}_i$ is anchored to $\{P_{i,1},\ldots,P_{i,k}\}$. Since, for all $i$, $\{P_{i,1},\ldots,P_{i,k}\}$ must assign colours to $S$ in a unique way, the colours assigned to $P_{i,1},\ldots,P_{i,k}$ can be uniquely determined by the colouring of $S$ for each $i$ if we have enough colours in $S$.

\begin{lemma}
If the colouring of $\widehat{\cD}_1$, while anchored to $\{P_{1,1},\ldots,P_{1,k}\}$, produces at least $k-1$ colours in $S$, then $\fd_r(\cD,\widehat{\cD}_1)$ is proto-anchored to $\big\{\{P_{1,c}\}_{c=1}^k,\ldots,\{P_{r,c}\}_{c=1}^k\big\}$.

\label{lem: flower form}
\end{lemma}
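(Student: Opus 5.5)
The plan is to build the required colouring from the unique anchored colouring of $\widehat{\cD}_1$, and then to use the hypothesis on colours in $S$ to force uniqueness. Let $\psi_1$ be the unique $k$-colouring of $\widehat{\cD}_1$ anchored to $\{P_{1,1},\ldots,P_{1,k}\}$. Anchoring means the sets $P_{1,1},\ldots,P_{1,k}$ are heterochromatic and $\psi_1(P_{1,j})\subseteq\{j\}$.

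\emph{Existence.} For each $i\in\{1,\ldots,r\}$, define $\psi_i=\psi_1\circ\theta_{i,1}$ on $V(H_i)$, with $\theta_{1,1}$ the identity. Since $\theta_{i,1}$ fixes $V(S)$ pointwise, all $\psi_i$ agree on $V(S)$. Moreover $V(H_{i_1})\cap V(H_{i_2})=V(S)$ for $i_1\neq i_2$, so the $\psi_i$ glue to a well-defined map $\psi$ on $\bigcup_i V(H_i)$. Every cycle of $\fd_r(\cD,\widehat{\cD}_1)$ lies in some $\widehat{\cD}_i$, and $\widehat{\cD}_i$ is the image of $\widehat{\cD}_1$ under $\theta_{1,i}$, so no cycle is monochromatic under $\psi$. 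Also $P_{i,j}=\theta_{1,i}(P_{1,j})$ gives $\psi(P_{i,j})=\psi_1(P_{1,j})$, so each family $P_{i,1},\ldots,P_{i,k}$ is heterochromatic and $\psi(P_{1,j})\subseteq\{j\}$. Hence $\psi$ has all the required properties.

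\emph{Uniqueness.} Let $\psi'$ be any $k$-colouring of $\fd_r(\cD,\widehat{\cD}_1)$ with the stated properties.
\begin{enumerate}
\item The restriction of $\psi'$ to $V(H_1)$ is a colouring of $\widehat{\cD}_1$ in which $P_{1,1},\ldots,P_{1,k}$ are heterochromatic and $\psi'(P_{1,j})\subseteq\{j\}$. By the anchoring, it equals $\psi_1$; in particular $\psi'|_{V(S)}=\psi_1|_{V(S)}$.
\item Fix $i\geq 2$. Heterochromaticity of $P_{i,1},\ldots,P_{i,k}$ under $\psi'$ gives an injective assignment of colours to the indices $j$ with $P_{i,j}\neq\emptyset$. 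Extend it arbitrarily to a permutation $\sigma$ of $\{1,\ldots,k\}$ with $\psi'(P_{i,j})\subseteq\{\sigma(j)\}$.
\item The map $\sigma^{-1}\circ\psi'\circ\theta_{1,i}$ is then a colouring of $\widehat{\cD}_1$ in which the $P_{1,j}$ are heterochromatic and $P_{1,j}$ receives colour $j$. By the anchoring it equals $\psi_1$, so $\psi'|_{V(H_i)}=\sigma\circ\psi_1\circ\theta_{i,1}$.
\item Evaluating on $V(S)$, where $\theta_{i,1}$ is the identity, and comparing with step 1 gives $\sigma(c)=c$ for every colour $c\in\psi_1(V(S))$.
\end{enumerate}

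By hypothesis $\psi_1(V(S))$ contains at least $k-1$ colours. A permutation of $\{1,\ldots,k\}$ fixing $k-1$ points is the identity, so $\sigma=\mathrm{id}$ and $\psi'|_{V(H_i)}=\psi_i$. This holds for every $i$, so $\psi'=\psi$, which proves the lemma.

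The main obstacle is step 2. When some $P_{i,j}$ are empty, heterochromaticity only determines $\sigma$ on the nonempty indices, so one must check that the choice of extension does not matter. It does not: the extended $\sigma$ still makes $P_{1,j}$ receive colour $j$ (condition $\psi(P_{1,j})\subseteq\{j\}$ is vacuous for empty sets), and the fixed-point argument then forces $\sigma$ to be the identity regardless of the extension chosen.
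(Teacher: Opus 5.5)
Your proof is correct and follows essentially the same route as the paper. The paper argues that each restriction $\psi'|_{V(H_i)}$ has the same colour classes as $\psi_i$, uses vertices of $S$ to force $\psi'(P_{i,c})\subseteq\{c\}$ for $k-1$ colours, and gets the last colour from heterochromaticity; this is exactly your permutation $\sigma$ fixing $k-1$ points, with your version also spelling out why $\psi'|_{V(H_1)}=\psi_1$ and why $\psi'|_{V(H_i)}$ is a colour-permutation of $\psi_i$, which the paper uses without detailed justification.
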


\begin{proof}
For all $i\in\{1,\ldots,r\}$, $\widehat{\cD}_i$ is anchored to $\{P_{i,1},\ldots,P_{i,k}\}$ with the $k$-colouring $\psi_i:V(H_i)\rightarrow \{1,\ldots,k\}$ for which $\psi_i(P_{i,c})\subseteq\{c\}$ for all $i$ and $c$. Since for all $i\neq j$ in $\{1,\ldots,r\}$, the restriction of the isomorphism $\theta_{i,j}:H_i\to H_j$ to $S$ is the identity map, for all $v\in V(S)$, $\psi_1(v)=\psi_2(v)=\cdots=\psi_r(v)$. Now, let $\psi:V(\bigcup_{i=1}^rH_i)\rightarrow \{1,\ldots,k\}$ be a $k$-colouring of $\fd_r(\cD,\widehat{\cD}_1)$ such that $\psi(v)=\psi_i(v)$ for all $i$ and $v\in V(H_i)$. For all $i$, $P_{i,1},\ldots,P_{i,k}$ are heterochromatic and for all $c\in\{1,\ldots,k\}$, $\psi(P_{1,c})\subseteq\{c\}$; it only remains to check the uniqueness to prove $\fd_r(\cD,\widehat{\cD}_1)$ is proto-anchored to $\big\{\{P_{1,c}\}_{c=1}^k,\ldots,\{P_{r,c}\}_{c=1}^k\big\}$. Suppose $\psi':V(\bigcup_{i=1}^rH_i)\rightarrow \{1,\ldots,k\}$ is a $k$-colouring of $\fd_r(\cD,\widehat{\cD}_1)$ such that for all $i$, $P_{i,1},\ldots,P_{i,k}$ are heterochromatic and for all $c\in\{1,\ldots,k\}$, $\psi'(P_{1,c})\subseteq\{c\}$. Let $v\in V(S)$ and suppose for some $c$, $\psi'(v)=\psi_1(v)=c$. If $P_{1,c}$ is empty, then for all $i$, $P_{i,c}$ is empty. If $P_{1,c}$ is not empty, then $P_{1,c}$ and $\{v\}$ are homochromatic. But, for all $i$, $\psi'$ induces a $k$-colouring for $\widehat{\cD}_i$ with the same colour classes as $\psi_i$. Therefore, $P_{i,c}$ and $\{v\}$ are also homochromatic, and so, $\psi'(P_{1,c})=\psi'(P_{2,c})=\cdots=\psi'(P_{r,c})=\{c\}$.
Without loss of generality, suppose $\{1,\ldots,k-1\}\subseteq \psi'(V(S))$.
Hence, for all $i$ and for all $c\in\{1,\ldots,k-1\}$, $\psi'(P_{i,c})\subseteq\{c\}$.
However, for all $i$, $\psi'(P_{i,k})\cap\{1,\ldots,k-1\}=\emptyset$, so $\psi'(P_{i,k})\subseteq\{k\}$ and consequently, $\psi'=\psi$.
\end{proof}

We now return our attention to focus on $k=2$ colours. Since $S$ is nonempty, it must produce at least one colour for some vertex and the sufficient condition of Lemma~\ref{lem: flower form} holds.
If for some $h\geq 6$, $\EA_h$ decomposes a subgraph $G^*$ of $G$, then $V(G^*)$ can be partitioned into $\{A_i\}_{i=1}^h\cup\{B_i\}_{i=1}^h$ such that for all $i$, $A_i$ and $B_i$ are heterochromatic for any 2-colouring of $\EA_h$.
We will be able to use this fact to our advantage; when we find the appropriate $\fd_r(\cD,\widehat{\cD}_1)$, by adding it to $\EA_h$, we will construct a uniquely 2-colourable 4-cycle decomposition of $G$.

Considering that a $4$-cycle system of order $n$ exists only if $n\equiv 1\ (\mathrm{mod}\ 8)$, we let $G=K_{8h+1}$ for some positive integer $h$, where $$V(G)=\{s_1\}\cup\bigcup_{i=1}^h\{v_{i,1},\ldots,v_{i,8}\}.$$
Let $G^*$ be the complete $h$-partite graph with parts $\{v_{i,1},\ldots,v_{i,8}\}$ for all $i\in\{1,\ldots,h\}$.
Now, for all $i\in\{1,\ldots,h\}$, choose $H_i$ to be the induced subgraph of $G- G^*$ on $\{s_1,v_{i,1},\ldots,v_{i,8}\}$, and note that $H_i$ is a clique of order 9. Since all complete graphs of the same order are isomorphic, $H_1,\ldots,H_h$ are isomorphic and they would all be pairwise vertex-disjoint except that $s_1$ is a common vertex among all of them. For all $i\neq j$ in $\{1,\ldots,h\}$ we let $\theta_{i,j}:H_i\to H_j$ be the isomorphism where $\theta_{i,j}(v_{i,1})=v_{j,1},\ldots,\theta_{i,j}(v_{i,8})=v_{j,8}$ and $\theta_{i,j}(s_1)=s_1$.

\begin{lemma}

Let $H$ be the complete graph of order $9$. Suppose $P_{1}=\{\mathtt{a_1,\ldots,a_4}\}$, $P_{2}=\{\mathtt{b_1,\ldots,b_4}\}$, and $V(H)=P_{1}\cup P_{2}\cup \{\mathtt{s_1}\}$. Then there exists a 4-cycle decomposition $\widehat{\cD}$ of $H$, which is anchored to $\{P_{1}, P_{2}\}$.

\label{lem: flower form 1}

\end{lemma}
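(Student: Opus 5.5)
We construct $\widehat{\cD}$ explicitly; $K_9$ has $36$ edges, so $\widehat{\cD}$ consists of nine $4$-cycles. First we translate "anchored to $\{P_1,P_2\}$" into conditions on the cycles. A $2$-colouring $\psi$ with $\psi(P_1)\subseteq\{1\}$ and $\psi(P_2)\subseteq\{2\}$ is determined entirely by $\psi(\mathtt{s_1})$, so there are exactly two candidates. We need exactly one of them to be a proper colouring of $\widehat{\cD}$; we aim for the one with $\psi(\mathtt{s_1})=2$. This requires:
\begin{enumerate}
\item no cycle of $\widehat{\cD}$ has its vertex set contained in $P_1$;
\item no cycle of $\widehat{\cD}$ has its vertex set contained in $P_2\cup\{\mathtt{s_1}\}$;
\item at least one cycle of $\widehat{\cD}$ has its vertex set contained in $P_1\cup\{\mathtt{s_1}\}$.
\end{enumerate}
Conditions (1) and (2) make the colouring with $\mathtt{s_1}$ coloured $2$ proper. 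Condition (3) makes the colouring with $\mathtt{s_1}$ coloured $1$ improper.

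We start with the forcing cycle $(\mathtt{s_1},\mathtt{a_1},\mathtt{a_2},\mathtt{a_3})$, which gives (3). It uses two of the six edges inside $P_1$ and two edges at $\mathtt{s_1}$. The leftover edges are:
\begin{itemize}
\item four edges inside $P_1$: $\mathtt{a_1a_3},\mathtt{a_1a_4},\mathtt{a_2a_4},\mathtt{a_3a_4}$;
\item all six edges of the $K_4$ on $P_2$;
\item the six edges $\mathtt{s_1a_2},\mathtt{s_1a_4},\mathtt{s_1b_1},\ldots,\mathtt{s_1b_4}$;
\item the sixteen edges between $P_1$ and $P_2$.
\end{itemize}
These are to be covered by eight more $4$-cycles, each containing at least one $\mathtt{a}$-vertex and at least one $\mathtt{b}$-vertex. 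Any such cycle is automatically non-monochromatic under the intended colouring. The natural building blocks are:
\begin{itemize}
\item cycles of the form $(\mathtt{a_i},\mathtt{a_j},\mathtt{b_k},\mathtt{b_l})$, each using one $P_1$-edge, one $P_2$-edge and two cross edges;
\item cycles of the form $(\mathtt{s_1},\mathtt{b_k},\mathtt{b_l},\mathtt{a_i})$ or $(\mathtt{s_1},\mathtt{b_k},\mathtt{a_i},\mathtt{b_l})$, which absorb the remaining edges at $\mathtt{s_1}$.
\end{itemize}

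The edge counts are consistent with this. We use four cycles of the first type to absorb the four leftover $P_1$-edges and four of the six $P_2$-edges. The remaining two $P_2$-edges and the six $\mathtt{s_1}$-edges are absorbed by four cycles through $\mathtt{s_1}$, each containing an $\mathtt{a}$-vertex. Cross edges fill the rest: $4\cdot 2$ in the first four cycles, plus the remaining cross edges in the $\mathtt{s_1}$-cycles, for $16$ in total. Every vertex has even degree $8$ in $K_9$, so there is no parity obstruction.

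The main obstacle is the bookkeeping: choosing the pairs $\{\mathtt{b_k},\mathtt{b_l}\}$ and the $\mathtt{a}$-vertices so that every cross edge $\mathtt{a_ib_k}$ is used exactly once. We would first fix the $P_2$-edges used by the four $\mathtt{a}\mathtt{a}\mathtt{b}\mathtt{b}$ cycles to be a perfect matching $\{\mathtt{b_1b_2},\mathtt{b_3b_4}\}$ taken twice, balanced against the $P_1$-edge structure. We would then solve for the four $\mathtt{s_1}$-cycles using the leftover cross edges and correct by small swaps if needed. Finally we list the nine cycles, check edge coverage, and verify conditions (1)--(3), which gives uniqueness.
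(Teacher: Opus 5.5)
Your translation of ``anchored to $\{P_1,P_2\}$'' into conditions (1)--(3) is correct. It is the same check the paper's proof rests on: its key cycle $\mathtt{(a_1,a_3,a_2,s_1)}$ forces $\mathtt{s_1}$ into the colour of $P_2$, and every other cycle meets both $P_1$ and $P_2$. The gap is that the lemma is a pure existence statement whose entire content is an explicit list of nine cycles, and you never produce one. You stop at ``solve \dots\ and correct by small swaps if needed''.

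Moreover, the skeleton you describe cannot be realised as stated. After the forcing cycle only six edges at $\mathtt{s_1}$ remain. Every $4$-cycle through $\mathtt{s_1}$ uses exactly two of them, so exactly three further cycles pass through $\mathtt{s_1}$, not four. The sixteen edges left after the forcing cycle and your four first-type cycles ($6$ at $\mathtt{s_1}$, $2$ inside $P_2$, $8$ cross) must therefore split into three cycles through $\mathtt{s_1}$ and one cycle avoiding $\mathtt{s_1}$, built from cross and $P_2$-edges only. In addition, using the matching $\{\mathtt{b_1b_2},\mathtt{b_3b_4}\}$ ``twice'' in the first-type cycles would put each of those edges in two cycles. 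The four $P_2$-edges used there must be distinct.

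With the count corrected, your scheme does go through. For example, take
\begin{itemize}
\item $\mathtt{(s_1,a_1,a_2,a_3)}$ (the forcing cycle);
\item $\mathtt{(a_2,a_4,b_1,b_3)}$, $\mathtt{(a_1,a_3,b_3,b_2)}$, $\mathtt{(a_3,a_4,b_4,b_1)}$, $\mathtt{(a_1,a_4,b_2,b_4)}$ (first type);
\item $\mathtt{(s_1,a_2,b_1,b_2)}$, $\mathtt{(s_1,a_4,b_3,b_4)}$, $\mathtt{(s_1,b_1,a_1,b_3)}$ (through $\mathtt{s_1}$);
\item $\mathtt{(a_2,b_2,a_3,b_4)}$ (the ninth cycle).
\end{itemize}
The four first-type cycles use the matchings $\{\mathtt{b_1b_3},\mathtt{b_2b_4}\}$ and $\{\mathtt{b_1b_4},\mathtt{b_2b_3}\}$. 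The third matching $\{\mathtt{b_1b_2},\mathtt{b_3b_4}\}$ goes into two of the $\mathtt{s_1}$-cycles, and the ninth cycle consists of cross edges only. One checks that every vertex is joined to each of the other eight exactly once, and (1)--(3) hold by inspection.

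The paper's list (first row of its table) has a different shape: it contains a cycle with two $P_1$-edges and one with two $P_2$-edges. Any explicitly verified list completes the proof; without one, what you have is a plan rather than a proof.
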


\begin{table}
\centering
\begin{tblr}{
  row{1} = {c},
  column{1} = {c},
  column{2} = {c},
  cell{2}{3} = {c},
  cell{3}{3} = {c},
  hlines,
  vlines,
}
$(H,S)$ & {$P_{1},P_{2}$} & $\cD$ & $\widehat{\cD}\setminus\cD$ \\

\hline

$(K_9,K_1)$    & {$\{\mathtt{a_1,\ldots,a_4}\},$\\ $\{\mathtt{b_1,\ldots,b_4}\}$}  & $\emptyset$ & {$\mathtt{(a_1,a_3,a_2,s_1)^*,(a_2,a_4,a_3,b_3),(a_3,b_1,a_4,b_4),}$\\
$\mathtt{(a_4,b_2,b_1,a_1),(b_1,s_1,b_2,a_2),(b_2,b_3,s_1,a_3),}$\\
$\mathtt{(s_1,b_4,b_3,a_4),(b_3,a_1,b_4,b_1),(b_4,a_2,a_1,b_2)}$} \\

{$(\overline{I_{5}},\overline{I'_{1}})$}    &  {$\{\mathtt{a_1,\ldots,a_4}\},$\\ $\{\mathtt{b_1,\ldots,b_4}\}$} & $\emptyset$ & {$\mathtt{(a_1,a_2,a_3,s_2)^*,(a_1,a_3,b_1,b_3),(s_1,a_2,s_2,b_2),}$\\
$\mathtt{(b_1,s_2,b_3,a_4),(b_1,b_2,b_3,s_1)^*,(a_1,s_1,a_3,b_4),}$\\
$\mathtt{(a_1,b_2,a_3,a_4),(b_1,a_2,b_3,b_4),(a_4,s_1,b_4,s_2),}$\\
$\mathtt{(a_2,a_4,b_2,b_4)}$} \\

{$(\overline{I_{6}},\overline{I'_{2}})$}   &  {$\{\mathtt{a_1,\ldots,a_4}\},$\\ $\{\mathtt{b_1,\ldots,b_4}\}$} & $\mathtt{(s_1,s_3,s_2,s_4)}$ & {$\mathtt{(a_1,a_2,a_3,s_2)^*,(s_1,a_3,s_3,b_2),(a_2,s_3,a_4,b_3),}$\\
$\mathtt{(a_3,a_4,b_1,s_4),(s_3,b_1,s_2,b_4)^*,(a_4,s_2,b_2,a_1),}$\\
$\mathtt{(b_1,b_2,b_3,s_1)^*,(s_2,b_3,s_4,a_2),(b_2,s_4,b_4,a_3),}$\\
$\mathtt{(b_3,b_4,a_1,s_3),(s_4,a_1,s_1,a_4)^*,(b_4,s_1,a_2,b_1),}$\\
$\mathtt{(a_2,a_4,b_2,b_4),(a_3,b_1,b_3,a_1)}$} \\

{$(\overline{I_{7}},\overline{I'_{3}})$}   &  {$\{\mathtt{a_1,\ldots,a_4}\},$\\ $\{\mathtt{b_1,\ldots,b_4}\}$} & {$\mathtt{(s_1,s_3,s_2,s_4),}$\\$\mathtt{(s_1,s_6,s_4,s_5),}$\\$\mathtt{(s_3,s_6,s_2,s_5)}$} & {$\mathtt{(a_1,a_2,a_3,s_2)^*,(s_1,a_3,s_3,b_2),(s_6,a_1,s_5,b_1),}$\\
$\mathtt{(a_3,a_4,b_1,s_4),(s_3,b_1,s_2,b_4)^*,(a_4,s_2,b_2,a_1),}$\\
$\mathtt{(b_1,b_2,b_3,s_1)^*,(s_2,b_3,s_4,a_2),(s_6,a_3,s_5,b_3),}$\\
$\mathtt{(b_3,b_4,a_1,s_3),(s_4,a_1,s_1,a_4)^*,(b_4,s_1,a_2,b_1),}$\\
$\mathtt{(a_2,a_4,b_2,b_4),(a_3,b_1,b_3,a_1),(a_2,s_3,a_4,s_6)^*,}$\\
$\mathtt{(b_2,s_4,b_4,s_5)^*,(s_6,b_2,a_3,b_4),(s_5,a_2,b_3,a_4)}$} \\

{$(\overline{I_{8}},\overline{I'_{4}})$}   &  {$\{\mathtt{a_1,\ldots,a_4}\},$\\ $\{\mathtt{b_1,\ldots,b_4}\}$} &  {$\mathtt{(s_1,s_8,s_2,s_7),}$\\$\mathtt{(s_1,s_4,s_8,s_6),}$\\$\mathtt{(s_3,s_2,s_5,s_7),}$\\$\mathtt{(s_3,s_6,s_4,s_5),}$\\$\mathtt{(s_1,s_3,s_8,s_5),}$\\$\mathtt{(s_4,s_2,s_6,s_7)}$}
&
{$\mathtt{(a_1,a_2,a_3,s_4)^*,(s_1,a_3,s_3,b_2),(s_6,a_1,s_5,b_1),}$\\
$\mathtt{(a_3,a_4,b_1,s_2),(s_3,b_1,s_4,b_4)^*,(a_4,s_4,b_2,a_1),}$\\
$\mathtt{(b_1,b_2,b_3,s_1)^*,(s_4,b_3,s_2,a_2),(s_6,a_3,s_5,b_3),}$\\
$\mathtt{(b_3,b_4,a_1,s_3),(s_2,a_1,s_1,a_4)^*,(b_4,s_1,a_2,b_1),}$\\
$\mathtt{(a_2,a_4,b_2,b_4),(a_3,b_1,b_3,a_1),(a_2,s_3,a_4,s_6)^*,}$\\
$\mathtt{(b_2,s_2,b_4,s_5)^*,(s_8,b_2,a_3,b_4),(s_7,a_2,b_3,a_4),}$\\
$\mathtt{(s_8,a_1,s_7,b_1),(s_8,a_2,s_5,a_4)^*,(s_8,a_3,s_7,b_3),}$\\
$\mathtt{(s_7,b_2,s_6,b_4)^*}$}
\end{tblr}
\caption{$\widehat{\cD}=\cD\cup(\widehat{\cD}\setminus \cD)$ is a 4-cycle decomposition of $H$, $\cD\subseteq\widehat{\cD}$ is a 4-cycle decomposition of the subgraph $S$ of $H$, and the cycles marked with an asterisk are called the \textit{key cycles}; $\widehat{\cD}$ is anchored to $\{P_1,P_2\}$ since the key cycles cannot be monochromatic (see Lemma \ref{lem: flower form 1} and Lemma \ref{lem: flower form 2}).}
        \label{tbl: flower form}
\end{table}

\begin{proof}
We use the cycle sets $\cD$ and $\widehat{\cD}\setminus \cD$ introduced on the first row of Table~\ref{tbl: flower form} for $(H,S)=(K_9,K_1)$. Note that $\widehat{\cD}=\cD\cup(\widehat{\cD}\setminus \cD)$. Let $\psi:V(H)\rightarrow \{1,2\}$ be a 2-colouring such that $\psi(P_1)=\{1\}$ and $\psi(P_2)=\{2\}$. Since the key cycle $\mathtt{(a_1,a_3,a_2,s_1)}$ cannot be monochromatic, $\psi(\mathtt{s_1})=2$, and so $\widehat{\cD}$ is anchored to $\{P_{1}, P_{2}\}$.
\end{proof}

For all $i\in\{1,\ldots,h\}$, let $\widehat{\cD}_i$ be the 4-cycle decomposition of $H_i$ which we found in Lemma~\ref{lem: flower form 1}, where $\mathtt{s_1}$ is identified with $s_1$, $\mathtt{a_j}$ with $v_{i,2j-1}$, and $\mathtt{b_j}$ with $v_{i,2j}$ for all $j\in\{1,2,3,4\}$. Thus, $\fd_h(\emptyset,\widehat{\cD}_1)$ is a 4-cycle decomposition of $G- G^*$ which, by Lemma~\ref{lem: flower form}, is proto-anchored to $$\left\{\big\{\{v_{1,1},v_{1,3},v_{1,5},v_{1,7}\},\{v_{1,2},v_{1,4},v_{1,6},v_{1,8}\}\big\},\ldots,\big\{\{v_{h,1},v_{h,3},v_{h,5},v_{h,7}\},\{v_{h,2},v_{h,4},v_{h,6},v_{h,8}\}\big\}\right\}.$$

Suppose $h\geq 6$. We define an ordering on each part of $G^*$ with $v_{i,1}<\cdots<v_{i,8}$. Since each part of $G^*$ has exactly eight vertices, $\EA_h$ decomposes $G^*$. Let $\cC=\EA_h\cup\fd_h(\emptyset,\widehat{\cD}_1)$, which is, in fact, a 4-cycle decomposition of $G$. We will prove that for any 2-colouring of $\cC$, $\cC$ is anchored to $\big\{\{v\},\emptyset\big\}$ for some vertex $v\in V(G)$.

We define the 2-colouring $\psi:V(G)\rightarrow\{1,2\}$ where $\psi\big(\bigcup_{i=1}^h\{v_{i,1},v_{i,3},v_{i,5},v_{i,7}\}\big)=\{1\}$ and $\psi\big(\{s_1\}\cup\bigcup_{i=1}^h\{v_{i,2},v_{i,4},v_{i,6},v_{i,8}\}\big)=\{2\}$. The restriction of $\psi$ to $V(G^*)$ is a super-alt-colouring which is a 2-colouring of $\EA_h$. Also, by Lemma~\ref{lem: flower form 1}, $\psi$ admits no monochromatic cycles in $\fd_h(\emptyset,\widehat{\cD}_1)$. Therefore, $\psi$ is a 2-colouring of $\cC$.

For any 2-colouring of $\cC$, $\{v_{1,1}\}$ and $\emptyset$ are heterochromatic. Since $\EA_h$ is exclusively alt-colourable, for all $i$, $\{v_{i,1},v_{i,3},v_{i,5},v_{i,7}\}$ and $\{v_{i,2},v_{i,4},v_{i,6},v_{i,8}\}$ are heterochromatic. Consequently, since $\fd_h(\emptyset,\widehat{\cD}_1)$ is proto-anchored to
$$\big\{\big\{\{v_{1,1},v_{1,3},v_{1,5},v_{1,7}\},\{v_{1,2},v_{1,4},v_{1,6},v_{1,8}\}\big\},\ldots,\big\{\{v_{h,1},v_{h,3},v_{h,5},v_{h,7}\},\{v_{h,2},v_{h,4},v_{h,6},v_{h,8}\}\big\}\big\},$$
it follows by definition that $\bigcup_{i=1}^h\{v_{i,1},v_{i,3},v_{i,5},v_{i,7}\}$
and
$\{s_1\}\cup\bigcup_{i=1}^h\{v_{i,2},v_{i,4},v_{i,6},v_{i,8}\}$
are heterochromatic as well. Hence, $\cC$ is anchored to $\big\{\{v_{1,1}\},\emptyset\big\}$, and so, $\cC$ is uniquely 2-colourable. Thus, we have found a uniquely 2-colourable 4-cycle system of order $8h+1$, proving Theorem~\ref{thm: U24CS} which we now restate.

\UtwofourCS*

Having now established Theorem~\ref{thm: U24CS}, it is time to provide a proof for Theorem~\ref{thm: U24CD}. To do so, we first refine the notation of a cocktail party graph.
Let $I_t$ be a 1-regular graph with $t$ edges, i.e., a 1-factor of order $2t$, and so its complement $\overline{I_t}=K_{2t}- I_t$ is the cocktail party graph of order $2t$. Note that if $I_{t'}$ is a subgraph of $I_t$, then $\overline{I_{t'}}$ is a subgraph of $\overline{I_t}$.

A $4$-cycle decomposition of a cocktail party graph of order $n$ exists only if $n\geq 2$ is even. Let $G=\overline{I_{4h+t}}$ for some positive integers $h$ and $t$, where $$V(G)=\{s_1,\ldots,s_{2t}\}\cup\bigcup_{i=1}^h\{v_{i,1},\ldots,v_{i,8}\}$$ and $$E(I_{4h+t})=\big\{\{s_1,s_2\},\{s_3,s_4\},\ldots,\{s_{2t-1},s_{2t}\}\big\}\cup\bigcup_{i=1}^h\big\{\{v_{i,1},v_{i,2}\},\ldots,\{v_{i,7},v_{i,8}\}\big\}.$$
Let $G^*$ be the complete $h$-partite graph with parts $\{v_{i,1},\ldots,v_{i,8}\}$ (see Figure~\ref{fig: flower form}). Since $I_{4h+t}$ and $G^*$ are edge-disjoint, $G^*$ is a subgraph of $G$.
Now, for all $i\in\{1,\ldots,h\}$, the induced subgraph $H_i$ of $G- G^*$ on $\{s_1,\ldots,s_{2t},v_{i,1},\ldots,v_{i,8}\}$, is a cocktail party graph of order $8+2t$ for which the missing 1-factor has the edge set $$\big\{\{s_1,s_2\},\ldots,\{s_{2t-1},s_{2t}\},\{v_{i,1},v_{i,2}\},\dots,\{v_{i,7},v_{i,8}\}\big\}.$$
Also, let $S$ be the induced subgraph of $G- G^*$ on $\{s_1,\ldots,s_{2t}\}$. Note that $S$ is also a cocktail party graph, in this case of order $2t$. Since all cocktail party graphs of the same order are isomorphic, $H_1,\ldots,H_h$ are isomorphic and they would all be pairwise vertex-disjoint except that $S$ is a common subgraph among all of them. For all $i\neq j$ in $\{1,\ldots,h\}$ we let $\theta_{i,j}:H_i\to H_j$ be the isomorphism where $\theta_{i,j}(v_{i,1})=v_{j,1},\ldots,\theta_{i,j}(v_{i,8})=v_{j,8}$ and $\theta_{i,j}(s_1)=s_1,\ldots,\theta_{i,j}(s_{2t})=s_{2t}$.

\begin{figure}
        \centering
        \begin{tikzpicture}[scale=1.35]

\begin{scope}[scale=0.42,xshift=15cm,yshift=16cm]

\coordinate (C_M) at ( 0,0);

	\coordinate (C_N1) at ( 0:5);
	\coordinate (C_N2) at ( -60:5);
	\coordinate (C_N3) at ( -120:5);
	\coordinate (C_N4) at ( 180:5);
	\coordinate (C_N5) at ( 120:5);
	\coordinate (C_N6) at ( 60:5);
	
	\coordinate (M0)  at ($ (C_M) + ( 0:1)$);
	\coordinate (M1)  at ($ (C_M) + ( -60:1)$);
	\coordinate (M2)  at ($ (C_M) + (-120:1)$);
	\coordinate (M3)  at ($ (C_M) + (180:1)$);
	\coordinate (M4)  at ($ (C_M) + (120:1)$);
	\coordinate (M5)  at ($ (C_M) + (60:1)$);

	\coordinate (N10)  at ($ (C_N1) + ( 90:2)$);
	\coordinate (N11)  at ($ (C_N1) + ( 45:2)$);
	\coordinate (N12)  at ($ (C_N1) + ( 0:2)$);
	\coordinate (N13)  at ($ (C_N1) + (-45:2)$);
	\coordinate (N14)  at ($ (C_N1) + (-90:2)$);
	\coordinate (N15)  at ($ (C_N1) + (-135:2)$);
	\coordinate (N16)  at ($ (C_N1) + (-180:2)$);
	\coordinate (N17)  at ($ (C_N1) + (135:2)$);
	
	\coordinate (N20)  at ($ (C_N2) + ( 90:2)$);
	\coordinate (N21)  at ($ (C_N2) + ( 45:2)$);
	\coordinate (N22)  at ($ (C_N2) + ( 0:2)$);
	\coordinate (N23)  at ($ (C_N2) + (-45:2)$);
	\coordinate (N24)  at ($ (C_N2) + (-90:2)$);
	\coordinate (N25)  at ($ (C_N2) + (-135:2)$);
	\coordinate (N26)  at ($ (C_N2) + (-180:2)$);
	\coordinate (N27)  at ($ (C_N2) + (135:2)$);
	
	\coordinate (N30)  at ($ (C_N3) + ( 90:2)$);
	\coordinate (N31)  at ($ (C_N3) + ( 45:2)$);
	\coordinate (N32)  at ($ (C_N3) + ( 0:2)$);
	\coordinate (N33)  at ($ (C_N3) + (-45:2)$);
	\coordinate (N34)  at ($ (C_N3) + (-90:2)$);
	\coordinate (N35)  at ($ (C_N3) + (-135:2)$);
	\coordinate (N36)  at ($ (C_N3) + (-180:2)$);
	\coordinate (N37)  at ($ (C_N3) + (135:2)$);
	
	\coordinate (N40)  at ($ (C_N4) + ( 90:2)$);
	\coordinate (N41)  at ($ (C_N4) + ( 45:2)$);
	\coordinate (N42)  at ($ (C_N4) + ( 0:2)$);
	\coordinate (N43)  at ($ (C_N4) + (-45:2)$);
	\coordinate (N44)  at ($ (C_N4) + (-90:2)$);
	\coordinate (N45)  at ($ (C_N4) + (-135:2)$);
	\coordinate (N46)  at ($ (C_N4) + (-180:2)$);
	\coordinate (N47)  at ($ (C_N4) + (135:2)$);
	
	\coordinate (N50)  at ($ (C_N5) + ( 90:2)$);
	\coordinate (N51)  at ($ (C_N5) + ( 45:2)$);
	\coordinate (N52)  at ($ (C_N5) + ( 0:2)$);
	\coordinate (N53)  at ($ (C_N5) + (-45:2)$);
	\coordinate (N54)  at ($ (C_N5) + (-90:2)$);
	\coordinate (N55)  at ($ (C_N5) + (-135:2)$);
	\coordinate (N56)  at ($ (C_N5) + (-180:2)$);
	\coordinate (N57)  at ($ (C_N5) + (135:2)$);
	
	\coordinate (N60)  at ($ (C_N6) + ( 90:2)$);
	\coordinate (N61)  at ($ (C_N6) + ( 45:2)$);
	\coordinate (N62)  at ($ (C_N6) + ( 0:2)$);
	\coordinate (N63)  at ($ (C_N6) + (-45:2)$);
	\coordinate (N64)  at ($ (C_N6) + (-90:2)$);
	\coordinate (N65)  at ($ (C_N6) + (-135:2)$);
	\coordinate (N66)  at ($ (C_N6) + (-180:2)$);
	\coordinate (N67)  at ($ (C_N6) + (135:2)$);

	\fill[green!10] (N11) -- (N12) -- (N13) -- (N23) -- (N24) -- (N34) -- (N35) -- (N45) -- (N46) -- (N47) -- (N57) -- (N50) -- (N60) -- (N61) -- cycle;
	
	\foreach \row in {1,...,6} {
        \foreach \col in {0,...,7} {
            \foreach \otherrow in {1,...,6} {
                \ifnum\row<\otherrow
                    \foreach \othercol in {0,...,7} {
                        \draw[green!50!black!90] (N\row\col) -- (N\otherrow\othercol);
                    }
                \fi
            }
        }
    }

        \foreach \row in {1,...,6} {
    \foreach \col in {0,...,7} {
            \fill (N\row\col) circle (3pt);
            }
            }

    \foreach \i in {0,...,5} {
    \fill[green!10] (M\i) circle (3pt);
    \draw[gold(metallic)] (M\i) circle (3pt);
    }

  \node[above=6pt] at (0,-9) {$G^*$};
\end{scope}

\begin{scope}[scale=0.42,xshift=15cm]

	\coordinate (C_M) at ( 0,0);

	\coordinate (C_N1) at ( 0:5);
	\coordinate (C_N2) at ( -60:5);
	\coordinate (C_N3) at ( -120:5);
	\coordinate (C_N4) at ( 180:5);
	\coordinate (C_N5) at ( 120:5);
	\coordinate (C_N6) at ( 60:5);
	
	\coordinate (M0)  at ($ (C_M) + ( 0:1)$);
	\coordinate (M1)  at ($ (C_M) + ( -60:1)$);
	\coordinate (M2)  at ($ (C_M) + (-120:1)$);
	\coordinate (M3)  at ($ (C_M) + (180:1)$);
	\coordinate (M4)  at ($ (C_M) + (120:1)$);
	\coordinate (M5)  at ($ (C_M) + (60:1)$);

	\coordinate (N10)  at ($ (C_N1) + ( 90:2)$);
	\coordinate (N11)  at ($ (C_N1) + ( 45:2)$);
	\coordinate (N12)  at ($ (C_N1) + ( 0:2)$);
	\coordinate (N13)  at ($ (C_N1) + (-45:2)$);
	\coordinate (N14)  at ($ (C_N1) + (-90:2)$);
	\coordinate (N15)  at ($ (C_N1) + (-135:2)$);
	\coordinate (N16)  at ($ (C_N1) + (-180:2)$);
	\coordinate (N17)  at ($ (C_N1) + (135:2)$);
	
	\coordinate (N20)  at ($ (C_N2) + ( 90:2)$);
	\coordinate (N21)  at ($ (C_N2) + ( 45:2)$);
	\coordinate (N22)  at ($ (C_N2) + ( 0:2)$);
	\coordinate (N23)  at ($ (C_N2) + (-45:2)$);
	\coordinate (N24)  at ($ (C_N2) + (-90:2)$);
	\coordinate (N25)  at ($ (C_N2) + (-135:2)$);
	\coordinate (N26)  at ($ (C_N2) + (-180:2)$);
	\coordinate (N27)  at ($ (C_N2) + (135:2)$);
	
	\coordinate (N30)  at ($ (C_N3) + ( 90:2)$);
	\coordinate (N31)  at ($ (C_N3) + ( 45:2)$);
	\coordinate (N32)  at ($ (C_N3) + ( 0:2)$);
	\coordinate (N33)  at ($ (C_N3) + (-45:2)$);
	\coordinate (N34)  at ($ (C_N3) + (-90:2)$);
	\coordinate (N35)  at ($ (C_N3) + (-135:2)$);
	\coordinate (N36)  at ($ (C_N3) + (-180:2)$);
	\coordinate (N37)  at ($ (C_N3) + (135:2)$);
	
	\coordinate (N40)  at ($ (C_N4) + ( 90:2)$);
	\coordinate (N41)  at ($ (C_N4) + ( 45:2)$);
	\coordinate (N42)  at ($ (C_N4) + ( 0:2)$);
	\coordinate (N43)  at ($ (C_N4) + (-45:2)$);
	\coordinate (N44)  at ($ (C_N4) + (-90:2)$);
	\coordinate (N45)  at ($ (C_N4) + (-135:2)$);
	\coordinate (N46)  at ($ (C_N4) + (-180:2)$);
	\coordinate (N47)  at ($ (C_N4) + (135:2)$);
	
	\coordinate (N50)  at ($ (C_N5) + ( 90:2)$);
	\coordinate (N51)  at ($ (C_N5) + ( 45:2)$);
	\coordinate (N52)  at ($ (C_N5) + ( 0:2)$);
	\coordinate (N53)  at ($ (C_N5) + (-45:2)$);
	\coordinate (N54)  at ($ (C_N5) + (-90:2)$);
	\coordinate (N55)  at ($ (C_N5) + (-135:2)$);
	\coordinate (N56)  at ($ (C_N5) + (-180:2)$);
	\coordinate (N57)  at ($ (C_N5) + (135:2)$);
	
	\coordinate (N60)  at ($ (C_N6) + ( 90:2)$);
	\coordinate (N61)  at ($ (C_N6) + ( 45:2)$);
	\coordinate (N62)  at ($ (C_N6) + ( 0:2)$);
	\coordinate (N63)  at ($ (C_N6) + (-45:2)$);
	\coordinate (N64)  at ($ (C_N6) + (-90:2)$);
	\coordinate (N65)  at ($ (C_N6) + (-135:2)$);
	\coordinate (N66)  at ($ (C_N6) + (-180:2)$);
	\coordinate (N67)  at ($ (C_N6) + (135:2)$);

	\fill[red!20] (N21) -- (N22) -- (N23) -- (N24) -- (N25) -- (N26) -- (M3) -- (M5) -- cycle;
	\fill[red!20] (N32) -- (N33) -- (N34) -- (N35) -- (N36) -- (N37) -- (M4) -- (M0) -- cycle;
	\fill[red!20] (N44) -- (N45) -- (N46) -- (N47) -- (N40) -- (M5) -- (M1) -- cycle;
	\fill[red!20] (N55) -- (N56) -- (N57) -- (N50) -- (N51) -- (N52) -- (M0) -- (M2) -- cycle;
	\fill[red!20] (N66) -- (N67) -- (N60) -- (N61) -- (N62) -- (N63) -- (M1) -- (M3) -- cycle;

	\fill[red!20] (N10) -- (N11) -- (N12) -- (N13) -- (N14) -- (M2) -- (M4) -- cycle;

    \foreach \row in {1,...,6} {
    \foreach \col in {0,...,6} {
    \foreach \coll in {\col,...,7} {
    			\ifnum\numexpr\coll-\col\relax=4
      		\else
            \draw[red!50!black] (N\row\col) -- (N\row\coll);
            \fi
            }
            }
            }

    \foreach \i in {0,...,5} {        
    \foreach \row in {1,...,6} {
    \foreach \col in {0,...,7} {
            \draw[thick,red!50!black] (N\row\col) -- (M\i);
            }
            }
            }

    \foreach \row in {1,...,6} {
    \foreach \col in {0,...,7} {
            \fill (N\row\col) circle (3pt);
            }
            }

	\fill[gold(metallic)!15] (M0) -- (M1) -- (M2) -- (M3) -- (M4) -- (M5) -- cycle;

    \foreach \col in {0,...,4} {
    \foreach \coll in {\col,...,5} {
    			\ifnum\numexpr\coll-\col\relax=3
      		\else
            \draw[gold(metallic)] (M\col) -- (M\coll);
            \fi
            }
            }

    \foreach \i in {0,...,5} {
    \fill[gold(metallic)] (M\i) circle (3pt);
    }

  \node[above=6pt] at (0,-9) {$G- G^*$};
\end{scope}

\begin{scope}[scale=0.42,yshift=16cm]

	\coordinate (C_M) at ( 0,0);

	\coordinate (C_N1) at ( 0:5);
	\coordinate (C_N2) at ( -60:5);
	\coordinate (C_N3) at ( -120:5);
	\coordinate (C_N4) at ( 180:5);
	\coordinate (C_N5) at ( 120:5);
	\coordinate (C_N6) at ( 60:5);
	
	\coordinate (M0)  at ($ (C_M) + ( 0:1)$);
	\coordinate (M1)  at ($ (C_M) + ( -60:1)$);
	\coordinate (M2)  at ($ (C_M) + (-120:1)$);
	\coordinate (M3)  at ($ (C_M) + (180:1)$);
	\coordinate (M4)  at ($ (C_M) + (120:1)$);
	\coordinate (M5)  at ($ (C_M) + (60:1)$);

	\coordinate (N10)  at ($ (C_N1) + ( 90:2)$);
	\coordinate (N11)  at ($ (C_N1) + ( 45:2)$);
	\coordinate (N12)  at ($ (C_N1) + ( 0:2)$);
	\coordinate (N13)  at ($ (C_N1) + (-45:2)$);
	\coordinate (N14)  at ($ (C_N1) + (-90:2)$);
	\coordinate (N15)  at ($ (C_N1) + (-135:2)$);
	\coordinate (N16)  at ($ (C_N1) + (-180:2)$);
	\coordinate (N17)  at ($ (C_N1) + (135:2)$);
	
	\coordinate (N20)  at ($ (C_N2) + ( 90:2)$);
	\coordinate (N21)  at ($ (C_N2) + ( 45:2)$);
	\coordinate (N22)  at ($ (C_N2) + ( 0:2)$);
	\coordinate (N23)  at ($ (C_N2) + (-45:2)$);
	\coordinate (N24)  at ($ (C_N2) + (-90:2)$);
	\coordinate (N25)  at ($ (C_N2) + (-135:2)$);
	\coordinate (N26)  at ($ (C_N2) + (-180:2)$);
	\coordinate (N27)  at ($ (C_N2) + (135:2)$);
	
	\coordinate (N30)  at ($ (C_N3) + ( 90:2)$);
	\coordinate (N31)  at ($ (C_N3) + ( 45:2)$);
	\coordinate (N32)  at ($ (C_N3) + ( 0:2)$);
	\coordinate (N33)  at ($ (C_N3) + (-45:2)$);
	\coordinate (N34)  at ($ (C_N3) + (-90:2)$);
	\coordinate (N35)  at ($ (C_N3) + (-135:2)$);
	\coordinate (N36)  at ($ (C_N3) + (-180:2)$);
	\coordinate (N37)  at ($ (C_N3) + (135:2)$);
	
	\coordinate (N40)  at ($ (C_N4) + ( 90:2)$);
	\coordinate (N41)  at ($ (C_N4) + ( 45:2)$);
	\coordinate (N42)  at ($ (C_N4) + ( 0:2)$);
	\coordinate (N43)  at ($ (C_N4) + (-45:2)$);
	\coordinate (N44)  at ($ (C_N4) + (-90:2)$);
	\coordinate (N45)  at ($ (C_N4) + (-135:2)$);
	\coordinate (N46)  at ($ (C_N4) + (-180:2)$);
	\coordinate (N47)  at ($ (C_N4) + (135:2)$);
	
	\coordinate (N50)  at ($ (C_N5) + ( 90:2)$);
	\coordinate (N51)  at ($ (C_N5) + ( 45:2)$);
	\coordinate (N52)  at ($ (C_N5) + ( 0:2)$);
	\coordinate (N53)  at ($ (C_N5) + (-45:2)$);
	\coordinate (N54)  at ($ (C_N5) + (-90:2)$);
	\coordinate (N55)  at ($ (C_N5) + (-135:2)$);
	\coordinate (N56)  at ($ (C_N5) + (-180:2)$);
	\coordinate (N57)  at ($ (C_N5) + (135:2)$);
	
	\coordinate (N60)  at ($ (C_N6) + ( 90:2)$);
	\coordinate (N61)  at ($ (C_N6) + ( 45:2)$);
	\coordinate (N62)  at ($ (C_N6) + ( 0:2)$);
	\coordinate (N63)  at ($ (C_N6) + (-45:2)$);
	\coordinate (N64)  at ($ (C_N6) + (-90:2)$);
	\coordinate (N65)  at ($ (C_N6) + (-135:2)$);
	\coordinate (N66)  at ($ (C_N6) + (-180:2)$);
	\coordinate (N67)  at ($ (C_N6) + (135:2)$);

    \foreach \row in {1,...,6} {
    \foreach \col in {0,...,6} {
    \foreach \coll in {\col,...,7} {
    			\ifnum\numexpr\coll-\col\relax=4
            \draw[red!50!black] (N\row\col) -- (N\row\coll);
            \fi
            }
            }
            }

    \foreach \row in {1,...,6} {
    \foreach \col in {0,...,7} {
            \fill (N\row\col) circle (3pt);
            }
            }

    \foreach \col in {0,...,4} {
    \foreach \coll in {\col,...,5} {
    			\ifnum\numexpr\coll-\col\relax=3
            \draw[gold(metallic)] (M\col) -- (M\coll);
            \fi
            }
            }

    \foreach \i in {0,...,5} {
    \fill[gold(metallic)] (M\i) circle (3pt);
    }

  \node[above=6pt] at (0,-9) {$I_{4h+t}$};
\end{scope}

\begin{scope}[scale=0.42]

\coordinate (C_M) at ( 0,0);

	\coordinate (C_N1) at ( 0:5);
	\coordinate (C_N2) at ( -60:5);
	\coordinate (C_N3) at ( -120:5);
	\coordinate (C_N4) at ( 180:5);
	\coordinate (C_N5) at ( 120:5);
	\coordinate (C_N6) at ( 60:5);
	
	\coordinate (M0)  at ($ (C_M) + ( 0:1)$);
	\coordinate (M1)  at ($ (C_M) + ( -60:1)$);
	\coordinate (M2)  at ($ (C_M) + (-120:1)$);
	\coordinate (M3)  at ($ (C_M) + (180:1)$);
	\coordinate (M4)  at ($ (C_M) + (120:1)$);
	\coordinate (M5)  at ($ (C_M) + (60:1)$);

	\coordinate (N10)  at ($ (C_N1) + ( 90:2)$);
	\coordinate (N11)  at ($ (C_N1) + ( 45:2)$);
	\coordinate (N12)  at ($ (C_N1) + ( 0:2)$);
	\coordinate (N13)  at ($ (C_N1) + (-45:2)$);
	\coordinate (N14)  at ($ (C_N1) + (-90:2)$);
	\coordinate (N15)  at ($ (C_N1) + (-135:2)$);
	\coordinate (N16)  at ($ (C_N1) + (-180:2)$);
	\coordinate (N17)  at ($ (C_N1) + (135:2)$);
	
	\coordinate (N20)  at ($ (C_N2) + ( 90:2)$);
	\coordinate (N21)  at ($ (C_N2) + ( 45:2)$);
	\coordinate (N22)  at ($ (C_N2) + ( 0:2)$);
	\coordinate (N23)  at ($ (C_N2) + (-45:2)$);
	\coordinate (N24)  at ($ (C_N2) + (-90:2)$);
	\coordinate (N25)  at ($ (C_N2) + (-135:2)$);
	\coordinate (N26)  at ($ (C_N2) + (-180:2)$);
	\coordinate (N27)  at ($ (C_N2) + (135:2)$);
	
	\coordinate (N30)  at ($ (C_N3) + ( 90:2)$);
	\coordinate (N31)  at ($ (C_N3) + ( 45:2)$);
	\coordinate (N32)  at ($ (C_N3) + ( 0:2)$);
	\coordinate (N33)  at ($ (C_N3) + (-45:2)$);
	\coordinate (N34)  at ($ (C_N3) + (-90:2)$);
	\coordinate (N35)  at ($ (C_N3) + (-135:2)$);
	\coordinate (N36)  at ($ (C_N3) + (-180:2)$);
	\coordinate (N37)  at ($ (C_N3) + (135:2)$);
	
	\coordinate (N40)  at ($ (C_N4) + ( 90:2)$);
	\coordinate (N41)  at ($ (C_N4) + ( 45:2)$);
	\coordinate (N42)  at ($ (C_N4) + ( 0:2)$);
	\coordinate (N43)  at ($ (C_N4) + (-45:2)$);
	\coordinate (N44)  at ($ (C_N4) + (-90:2)$);
	\coordinate (N45)  at ($ (C_N4) + (-135:2)$);
	\coordinate (N46)  at ($ (C_N4) + (-180:2)$);
	\coordinate (N47)  at ($ (C_N4) + (135:2)$);
	
	\coordinate (N50)  at ($ (C_N5) + ( 90:2)$);
	\coordinate (N51)  at ($ (C_N5) + ( 45:2)$);
	\coordinate (N52)  at ($ (C_N5) + ( 0:2)$);
	\coordinate (N53)  at ($ (C_N5) + (-45:2)$);
	\coordinate (N54)  at ($ (C_N5) + (-90:2)$);
	\coordinate (N55)  at ($ (C_N5) + (-135:2)$);
	\coordinate (N56)  at ($ (C_N5) + (-180:2)$);
	\coordinate (N57)  at ($ (C_N5) + (135:2)$);
	
	\coordinate (N60)  at ($ (C_N6) + ( 90:2)$);
	\coordinate (N61)  at ($ (C_N6) + ( 45:2)$);
	\coordinate (N62)  at ($ (C_N6) + ( 0:2)$);
	\coordinate (N63)  at ($ (C_N6) + (-45:2)$);
	\coordinate (N64)  at ($ (C_N6) + (-90:2)$);
	\coordinate (N65)  at ($ (C_N6) + (-135:2)$);
	\coordinate (N66)  at ($ (C_N6) + (-180:2)$);
	\coordinate (N67)  at ($ (C_N6) + (135:2)$);

	\fill[green!10] (N11) -- (N12) -- (N13) -- (N23) -- (N24) -- (N34) -- (N35) -- (N45) -- (N46) -- (N47) -- (N57) -- (N50) -- (N60) -- (N61) -- cycle;
	
	\foreach \row in {1,...,6} {
        \foreach \col in {0,...,7} {
            \foreach \otherrow in {1,...,6} {
                \ifnum\row<\otherrow
                    \foreach \othercol in {0,...,7} {
                        \draw[green!80!black!20] (N\row\col) -- (N\otherrow\othercol);
                    }
                \fi
            }
        }
    }
    
    \foreach \i in {0,...,5} {
    \fill[green!10] (M\i) circle (3pt);
    \draw[gold(metallic)] (M\i) circle (3pt);
    }
    
    \foreach \row in {6} {
        \foreach \col in {5} {
            \foreach \otherrow in {1,...,5} {
                    \foreach \othercol in {0,...,7} {
                        \draw (N\row\col) -- (N\otherrow\othercol);
                    }
            }
        }
    }

        \foreach \row in {1,...,6} {
    \foreach \col in {0,...,7} {
            \fill (N\row\col) circle (3pt);
            }
            }

    \node[above right=-3pt,align=center] at (N65) {$v_{i,j}$};

  \node[above=6pt,align=center] at (0,-9.5) {$v_{i,j}$ is adjacent to every $v_{i',j'}$ \\ in $G^*$ as long as $i\neq i'$};
\end{scope}

\end{tikzpicture}

        \caption{$I_{4h+t}$ and two prominent subgraphs of $G=\overline{I_{4h+t}}$ for $h=6$ and $t=3$; each of the six groups of vertices around the centre, arranged in a circular fashion, consists of the vertices $v_{i,1},\ldots,v_{i,8}$ for some $i\in\{1,\ldots,6\}$, and the vertices at the centre are the vertices $s_1,\ldots,s_6$. Also shown is the neighbourhood in $G^*$ of a vertex $v_{i,j}$.}
        \label{fig: flower form}
\end{figure}

\begin{lemma}

Given a positive integer $t$, let $I_{t+4}$ and $I'_t$ be 1-factors of orders $2t+8$ 
\linebreak
and 
$2t$ respectively such that $E(I'_t)=\big\{\{\mathtt{s_1},\mathtt{s_2}\},\ldots,\{\mathtt{s_{2t-1}},\mathtt{s_{2t}}\}\big\}$ and 
$E(I_{t+4})=E(I'_t)\cup\big\{\{\mathtt{a_1},\mathtt{b_1}\},\ldots,\{\mathtt{a_4},\mathtt{b_4}\}\big\}$. Let $H=\overline{I_{t+4}}$ and $S=\overline{I'_{t}}$ be cocktail party graphs. Then there exist 4-cycle decompositions $\cD$ of $S$ and $\widehat{\cD}$ of $H$, such that $\cD\subseteq \widehat{\cD}$ and $\widehat{\cD}$ is anchored to $\{P_{1}, P_{2}\}$ where $P_{1}=\{\mathtt{a_1,\ldots,a_4}\}$ and $P_{2}=\{\mathtt{b_1,\ldots,b_4}\}$.

\label{lem: flower form 2}
\end{lemma}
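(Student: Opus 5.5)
The plan is to build $\widehat{\cD}$ explicitly for $t\in\{1,2,3,4\}$ using the rows of Table~\ref{tbl: flower form} for $(\overline{I_5},\overline{I'_1})$ through $(\overline{I_8},\overline{I'_4})$, and then reduce every larger $t$ to these base cases.

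For each base case we need two things.
\begin{enumerate}
\item The listed cycles, together with the listed $\cD$, form a 4-cycle decomposition of $H$, with $\cD$ decomposing $S$. This is routine: count edges ($(2t+8)(2t+6)/2$ in total and $2t(2t-2)/2$ in $S$), and check that each non-$I$ pair appears exactly once.
\item The decomposition is anchored to $\{P_1,P_2\}$.
\end{enumerate}
For the second point, fix $\psi$ with $\psi(P_1)=\{1\}$ and $\psi(P_2)=\{2\}$. Each $\mathtt{s_j}$ should lie in a key cycle (marked with an asterisk) whose other three vertices all lie in $P_1$ or all lie in $P_2$, which forces its colour. Where a key cycle instead contains two $\mathtt{s}$ vertices and two vertices of one $P_i$, I would first use the key cycles that force single $\mathtt{s}$'s. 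The remaining key cycles then contain one already-coloured $\mathtt{s}$ together with one new $\mathtt{s}$, which forces the new one. The example is $(\mathtt{a_2},\mathtt{s_3},\mathtt{a_4},\mathtt{s_6})$ once $\mathtt{s_3}$ is known to have colour $2$. Finally, check that the forced colouring leaves no cycle of $\widehat{\cD}$ monochromatic; this gives existence, and the forcing gives uniqueness.

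For general $t$, write $t=4q+t_0$ with $t_0\in\{1,2,3,4\}$ and proceed by induction, adding four pairs $\{\mathtt{s},\mathtt{s}'\}$ at a time. Let $T$ be the new set of $8$ vertices and $R$ the old $2t+8$ vertices. The new edges are those within $T$, which form $\overline{I_4}$, and the complete bipartite graph between $T$ and $R$. The latter is $K_{8,2t+8}$, which is decomposable via $\mathrm{BD}(\cdot,\cdot)$ since both sides are even. For $\overline{I_4}$ we use the $\cD$ part of the $t=4$ row, which is a decomposition of $\overline{I'_4}$.

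The crucial point is anchoring the new $\mathtt{s}$ vertices. I would pair up $T$ into $2$-subsets, and pair $R$ so that it includes the pairs $\{\mathtt{a_1},\mathtt{a_2}\}$, $\{\mathtt{a_3},\mathtt{a_4}\}$, $\{\mathtt{b_1},\mathtt{b_2}\}$, $\{\mathtt{b_3},\mathtt{b_4}\}$. A $4$-cycle $(\mathtt{a_1},x,\mathtt{a_2},y)$ then forces $x$ and $y$ not both to be colour $1$, and the analogous cycle with the $\mathtt{b}$'s forces them not both to be colour $2$. That only yields that $x$ and $y$ are heterochromatic, not which colour each gets.

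To pin the colours down, I would instead modify the bipartite part. Use a few key cycles of the form $(\mathtt{a_i},x,\mathtt{a_j},\mathtt{a_k})$, as in the tables, and replace some $\mathrm{BD}$ cycles accordingly, or simply re-use the $t=4$ template on $T\cup P_1\cup P_2$, which forces every vertex of $T$ directly. The remaining edges are then between $T$ and the old $\mathtt{s}$ vertices, which form $K_{8,2t}$ and are decomposed by $\mathrm{BD}$ with no constraint needed. The edges inside $P_1\cup P_2$, however, would be reused, so this re-use must be done carefully.

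This edge-conflict is the main obstacle. Concretely, I would take the $t=4$ table rows only for the cycles containing a vertex of $T$ and some $\mathtt{a}$ or $\mathtt{b}$, plus the $\cD$ on $T$. I would then verify that these cycles use exactly the $T$--$(P_1\cup P_2)$ edges and the $T$--$T$ edges, with no $P$--$P$ edges. If the table mixes these, I would fall back to designing a small gadget decomposing $K_{8,8}$ between $T$ and $P_1\cup P_2$ with key cycles of the form $(x,\mathtt{a_i},y,\mathtt{a_j})$, combined with a parity argument.
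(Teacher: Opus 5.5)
\textbf{Verdict.} The proposal has a genuine gap: the inductive step is never constructed, and none of the routes you sketch for it can work.

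\textbf{Base cases.} These agree with the paper: the rows of Table~\ref{tbl: flower form}, with colours propagated along the key cycles. There is one slip: $\mathtt{s_3}$ is forced to colour $1$, not $2$. With colour $2$ the cycle $(\mathtt{a_2},\mathtt{s_3},\mathtt{a_4},\mathtt{s_6})$ would force nothing.

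\textbf{Why your inductive routes fail.} Key cycles of the form $(\mathtt{a_i},x,\mathtt{a_j},\mathtt{a_k})$, whether added directly or taken from the $t=4$ row, use edges inside $P_1$ (or $P_2$), and the old decomposition already covers all of these. The key cycles of the form $(s,p,s',p')$ can only propagate a colour; they cannot start the forcing.

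No $K_{8,8}$ gadget can work either, however it is designed. In your scheme every new cycle lies inside $T$ or has the form $(x,r,y,r')$ with $x,y\in T$ and $r,r'$ old. Exactly $t+4$ of the $2t+8$ old vertices have each colour. The bipartite cycles through $x\in T$ pair up all old vertices, so $x$ lies in the same number $k_x$ of them with both old vertices coloured $1$ as with both coloured $2$. Let $E_c$ be the multigraph on $T$ with an edge $xy$ for each cycle $(x,r,y,r')$ whose old vertices both have colour $c$. Let $e_c(X)$ and $e_c(X,Y)$ count the $E_c$-edges inside $X$ and between $X$ and $Y$. If $C_1,C_2$ are the colour classes on $T$ of any valid colouring, then $C_c$ is independent in $E_c$, so
\[
\sum_{x\in C_1}k_x=e_1(C_1,C_2)=e_2(C_1,C_2)+2e_2(C_1),\qquad \sum_{x\in C_2}k_x=e_2(C_1,C_2)=e_1(C_1,C_2)+2e_1(C_2).
\]
Adding gives $e_1(C_2)=e_2(C_1)=0$. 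So swapping the two colours on $T$ gives another valid colouring; cycles inside $T$ do not notice the swap. Hence $T$ is never anchored.

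\textbf{The missing idea.} You need something that breaks this colour symmetry. The table's mechanism, cycles containing an edge between two already-coloured vertices, is unavailable as long as you keep the old decomposition of $S$. The paper's key idea is to discard it.

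For $t>4$ the paper takes $\cD_t$ to be an isomorphic copy of the whole anchored $\widehat{\cD}_{t-4}$, placed on $\mathtt{s_1},\ldots,\mathtt{s_{2t}}$. In this copy, $\{\mathtt{s_1},\mathtt{s_3},\mathtt{s_5},\mathtt{s_7}\}$ and $\{\mathtt{s_2},\mathtt{s_4},\mathtt{s_6},\mathtt{s_8}\}$ play the roles of $P_1$ and $P_2$. It then sets
\[
\widehat{\cD}_t=\cD_t\cup(\widehat{\cD}_{t-1}\setminus\cD_{t-1})\cup\{(\mathtt{s_{2t-1}},\mathtt{a_i},\mathtt{s_{2t}},\mathtt{b_i})\}_{i=1}^4 .
\]
The forcing then runs as follows:
\begin{itemize}
\item The key cycles of $\widehat{\cD}_4\setminus\cD_4\subseteq\widehat{\cD}_t$, which avoid $\mathtt{s}$--$\mathtt{s}$ edges, force $\mathtt{s_1},\ldots,\mathtt{s_8}$.
\item By induction $\cD_t$ is anchored, which forces every other $\mathtt{s_j}$.
\item The symmetry-breaking key cycles of $\cD_t$, such as $(\mathtt{s_1},\mathtt{s_3},\mathtt{s_5},\mathtt{s_{10}})$, use edges among $\mathtt{s_1},\ldots,\mathtt{s_8}$. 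These edges are free exactly because the $\mathtt{s}$-part is decomposed afresh.
\end{itemize}
The cycles $(\mathtt{s_{2t-1}},\mathtt{a_i},\mathtt{s_{2t}},\mathtt{b_i})$ force nothing, and they do not need to.
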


\begin{proof}
For clarity, let $(G_t,S_t)=(\overline{I_{t+4}},\overline{I'_{t}})$ for all positive $t$; we want to find $(\widehat{\cD}_t,\cD_t)$ as the corresponding decompositions. Note that for all $t>1$, $E(G_t)=E(S_t)\cup \big(E(G_{t-1})\setminus E(S_{t-1})\big)\cup \big(\bigcup_{i=1}^4 \big\{\{s_{2t-1},a_i\},\{s_{2t},a_i\},\{s_{2t-1},b_i\},\{s_{2t},b_i\}\big\} \big)$.

For $t\in\{1,2,3,4\}$, the cycles of $\widehat{\cD}_t$ and $\cD_t$ are given in Table~\ref{tbl: flower form}, and we construct $\widehat{\cD}_t$ and $\cD_t$ for $t>4$ inductively. For all $t$, let $\varphi_t:V(G_t)\rightarrow V(S_{t+4})$ be the isomorphism from $G_t$ to $S_{t+4}$ such that for all $i$, $\varphi_t(\mathtt{a_{i}})=\mathtt{s_{2i-1}}$, $\varphi_t(\mathtt{b_{i}})=\mathtt{s_{2i}}$, and $\varphi_t(\mathtt{s_{i}})=\mathtt{s_{i+8}}$. For all $t>4$, we define $\cD_t$ as the decomposition that carries over from $\widehat{\cD}_{t-4}$ through the isomorphism $\varphi_{t-4}$, i.e., if $(u_1,u_2,u_3,u_4)\in \widehat{\cD}_{t-4}$, then $\big(\varphi_{t-4}(u_1),\varphi_{t-4}(u_2),\varphi_{t-4}(u_3),\varphi_{t-4}(u_4)\big)\in \cD_t$. For all $t>4$, we let
$$\widehat{\cD}_{t}=\cD_t\cup(\widehat{\cD}_{t-1}\setminus\cD_{t-1})\cup\mathtt{\big\{(s_{2t-1},a_i,s_{2t},b_i)\big\}_{i=1}^4}.$$

Clearly, for all $t$, $\widehat{\cD}_t$ and $\cD_t$ are 4-cycle decompositions of $G_t$ and $S_t$, respectively, and $\cD_t\subseteq \widehat{\cD}_t$. For all $t$, we define the 2-colouring $\psi_t:V(G_t)\rightarrow\{1,2\}$ where $\psi_t(P_1\cup\mathtt{\{s_1,s_3,\ldots,s_{2t-1}\}})=\{1\}$ and $\psi_t(P_2\cup\mathtt{\{s_2,s_4,\ldots,s_{2t}\}})=\{2\}$. It is easy to check that every cycle of $\widehat{\cD}_t$ has at least one vertex in $P_1\cup\mathtt{\{s_1,s_3,\ldots,s_{2t-1}\}}$ and at least one vertex in $P_2\cup\mathtt{\{s_2,s_4,\ldots,s_{2t}\}}$. Therefore, $\psi_t$ is a 2-colouring of $\widehat{\cD}_t$. We will show, by induction, that for all $t$, $\psi_t$ is the only 2-colouring of $\widehat{\cD}_t$ for which $P_1$ and $P_2$ are heterochromatic with colours 1 and 2, respectively and so $\widehat{\cD}_t$ is anchored to $\{P_{1}, P_{2}\}$.

Let $t\in\{1,2,3,4\}$ and for an arbitrary 2-colouring of $\widehat{\cD}_t$ suppose $P_1$ and $P_2$ are heterochromatic with colours 1 and 2, respectively. Since the key cycles (see Table~\ref{tbl: flower form}) cannot be monochromatic, $P_1\cup\mathtt{\{s_1,s_3,\ldots,s_{2t-1}\}}$ and $P_2\cup\mathtt{\{s_2,s_4,\ldots,s_{2t}\}}$ must be heterochromatic as well. Hence, the 2-colouring must be $\psi_t$, and by definition, $\widehat{\cD}_t$ is anchored to $\{P_{1}, P_{2}\}$.

Now let $t>4$ and assume, for all $t'<t$, $\psi_t$ is the only 2-colouring of $\widehat{\cD}_t$ for which $P_1$ and $P_2$ are heterochromatic with colours 1 and 2, respectively. For an arbitrary 2-colouring of $\widehat{\cD}_t$ suppose $P_1$ and $P_2$ are heterochromatic with colours 1 and 2, respectively. For all $t>4$, $\widehat{\cD}_{4}\setminus\cD_{4}\subseteq\widehat{\cD}_{t-1}\setminus\cD_{t-1}\subseteq\widehat{\cD}_t$, and since the key cycles (see Table~\ref{tbl: flower form}) cannot be monochromatic, $P_1\cup\mathtt{\{s_1,s_3,s_5,s_7\}}$ and $P_2\cup\mathtt{\{s_2,s_4,s_6,s_8\}}$ must be heterochromatic. Consequently, $\mathtt{\{s_1,s_3,s_5,s_7\}}$ and $\mathtt{\{s_2,s_4,s_6,s_8\}}$ are heterochromatic. Note that since $\widehat{\cD}_{t-4}$ is anchored to $\{P_{1}, P_{2}\}$, then $\cD_t$ is anchored to $\{\varphi_{t-4}(P_{1}), \varphi_{t-4}(P_{2})\}=\mathtt{\big\{\{s_1,s_3,s_5,s_7\}},\mathtt{\{s_2,s_4,s_6,s_8\}\big\}}$. As a result, $\mathtt{\{s_1,s_3,\ldots,s_{2t-1}\}}$ and $\mathtt{\{s_2,s_4,\ldots,s_{2t}\}}$ must be heterochromatic, and so $P_1\cup\mathtt{\{s_1,s_3,\ldots,s_{2t-1}\}}$ and $P_2\cup\mathtt{\{s_2,s_4,\ldots,s_{2t}\}}$ must be heterochromatic as well. Hence, for all $t$, the 2-colouring must be $\psi_t$, and by definition, $\widehat{\cD}_t$ is anchored to $\{P_{1}, P_{2}\}$.
\end{proof}

Let $h$ and $t$ be positive integers, and for all $i\in\{1,\ldots,h\}$, let $\widehat{\cD}_i$ be the 4-cycle decomposition of $H_i$ which we found in Lemma~\ref{lem: flower form 2} (we denoted that decomposition with $\widehat{\cD}_t$ in the proof), where $\mathtt{s_j'}$ is identified with $s_j'$, $\mathtt{a_j}$ with $v_{i,2j-1}$, and $\mathtt{b_j}$ with $v_{i,2j}$ for all $j\in\{1,2,3,4\}$ and $j'\in\{1,\ldots,2t\}$. Let $\cD$ be the 4-cycle decomposition of $S$ which we also found in Lemma~\ref{lem: flower form 2} (we denoted that decomposition with $\cD_t$ in the proof). Thus, $\fd_h(\cD,\widehat{\cD}_1)$ is a 4-cycle decomposition of $G- G^*$ which, by Lemma~\ref{lem: flower form}, is proto-anchored to $$\big\{\big\{\{v_{1,1},v_{1,3},v_{1,5},v_{1,7}\},\{v_{1,2},v_{1,4},v_{1,6},v_{1,8}\}\big\},\ldots,\big\{\{v_{h,1},v_{h,3},v_{h,5},v_{h,7}\},\{v_{h,2},v_{h,4},v_{h,6},v_{h,8}\}\big\}\big\}.$$

Suppose $h\geq 6$. We define an ordering on each part of $G^*$ with $v_{i,1}<\cdots<v_{i,8}$. Since each part of $G^*$ has exactly eight vertices, $\EA_h$ decomposes $G^*$. Let $\cC=\EA_h\cup\fd_h(\cD,\widehat{\cD}_1)$ be a 4-cycle decomposition of $G$. We will prove that for any 2-colouring of $\cC$, $\cC$ is anchored to $\big\{\{v\},\emptyset\big\}$ for some vertex $v\in V(G)$.

We define the 2-colouring $\psi:V(G)\rightarrow\{1,2\}$ where $$\psi\big(\{s_1,s_3,\dots,s_{2t-1}\}\cup\bigcup_{i=1}^h\{v_{i,1},v_{i,3},v_{i,5},v_{i,7}\}\big)=\{1\}$$ and $$\psi\big(\{s_2,s_4,\dots,s_{2t}\}\cup\bigcup_{i=1}^h\{v_{i,2},v_{i,4},v_{i,6},v_{i,8}\}\big)=\{2\}.$$ The restriction of $\psi$ to $V(G^*)$ is a super-alt-colouring which, by definition, agrees with $\EA_h$. Also, by Lemma~\ref{lem: flower form 2}, $\psi$ admits no monochromatic cycles in $\fd_h(\cD,\widehat{\cD}_1)$. Therefore, $\psi$ is a 2-colouring of $\cC$.

For any 2-colouring of $\cC$, $\{v_{1,1}\}$ and $\emptyset$ are heterochromatic. Since $\EA_h$ is exclusively alt-colourable, for all $i$, $\{v_{i,1},v_{i,3},v_{i,5},v_{i,7}\}$ and $\{v_{i,2},v_{i,4},v_{i,6},v_{i,8}\}$ are heterochromatic. Consequently, since $\fd_h(\cD,\widehat{\cD}_1)$ is proto-anchored to
$$\big\{\big\{\{v_{1,1},v_{1,3},v_{1,5},v_{1,7}\},\{v_{1,2},v_{1,4},v_{1,6},v_{1,8}\}\big\},\ldots,\big\{\{v_{h,1},v_{h,3},v_{h,5},v_{h,7}\},\{v_{h,2},v_{h,4},v_{h,6},v_{h,8}\}\big\}\big\},$$
it follows by definition that
$$\{s_1,s_3,\dots,s_{2t-1}\}\cup\bigcup_{i=1}^h\{v_{i,1},v_{i,3},v_{i,5},v_{i,7}\}$$
and
$$\{s_2,s_4,\dots,s_{2t}\}\cup\bigcup_{i=1}^h\{v_{i,2},v_{i,4},v_{i,6},v_{i,8}\}$$
are heterochromatic as well. Hence, $\cC$ is anchored to $\big\{\{v_{1,1}\},\emptyset\big\}$, and so, $\cC$ is uniquely 2-colourable. Thus, we have found a uniquely 2-colourable 4-cycle decomposition of $\overline{I_{4h+t}}$ for all integers $t\geq 1$ and $h\geq 6$, proving Theorem~\ref{thm: U24CS} which we now restate.

\UtwofourCD*

\section{Discussion}
\label{sec4}

The results of this paper give rise to a natural question: can we use the method we have presented here to obtain results for cycles of other lengths or more colours? With a slight modification and a simple computer search (when feasible) we can find a suitable $\fd_r(\cD,\widehat{\cD}_1)$ to ``uniquify'' an exclusively alt-colourable $m$-cycle decomposition of a complete multipartite graph ($m>3$), in a similar way as we did for the case $m=4$ in Section~\ref{sec3}.
However, the real problem is to find such an exclusively alt-colourable $m$-cycle decomposition. As one might notice, what we achieved in Section~\ref{sec2} relied heavily on Lemma~\ref{lem: a b TPT} which gave us an equation between the number of monochromatic sets of two non-intersecting partitions.
Unfortunately, Lemma~\ref{lem: a b TPT} does not naturally extend to the cases in which the number of colours and the size of the sets in each partition are not 2.
Therefore, we have begun to explore and develop other sophisticated methods of construction to find such decompositions.

In \cite{MR2185517}, Burgess and Pike established that a 3-chromatic 4-cycle system of order $n$ exists for all admissible $n\geq 49$. To this day, 49 is the smallest order for which a 3-chromatic 4-cycle system is known to exist. Curiously, our results prove the existence of uniquely 2-colourable 4-cycle systems of all admissible orders $n\geq 49$. Although we have yet to determine whether 49 is in fact the smallest order for which both problems have a solution, this coincidence cannot be overlooked.
Is there some connection in general between the smallest order of a $k$-chromatic $m$-cycle system and the smallest order of a uniquely $(k-1)$ colourable $m$-cycle system?

\section*{Acknowledgements}
Authors Burgess and Pike acknowledge research grant support from NSERC Discovery Grants RGPIN-2025-04633 and RGPIN-2022-03829, respectively.

\bibliographystyle{plain}
\bibliography{mybibliography1}

\end{document}